\newtheorem{theorem}{Theorem}[section]
\newtheorem{thm}[theorem]{Theorem}
\newtheorem{lem}[theorem]{Lemma}
\newtheorem{prop}[theorem]{Proposition}
\theoremstyle{definition}
\newtheorem{defn}[theorem]{Definition}
\theoremstyle{remark}
\newcommand{\mbb}{\mathbb}
\newcommand{\NN}{\mbb{N}}
\newcommand{\CC}{\mbb{C}}
\newcommand{\PP}{\mbb{P}}
\newcommand{\FF}{\mbb{F}}
\newcommand{\mc}{\mathcal}
\newcommand{\mcB}{\mc{B}}
\newcommand{\mcH}{\mc{H}}
\newcommand{\mcL}{\mc{L}}
\newcommand{\mcM}{\mc{M}}
\newcommand{\mcN}{\mc{N}}
\newcommand{\mcO}{\mc{O}}
\newcommand{\mcS}{\mc{S}}
\newcommand{\mcT}{\mc{T}}
\newcommand{\mcX}{\mc{X}}
\newcommand{\mfm}{\mathfrak{m}}
\newcommand{\OO}{\mc{O}}
\newcommand{\wht}{\widehat}
\newcommand{\whts}{\wht{s}}
\newcommand{\whtOO}{\wht{\OO}}
\newcommand{\tsigma}{\tilde{\sigma}}
\newcommand{\SP}{\text{Spec }}
\newcommand{\Pic}{\textrm{Pic}}
\newsavebox{\sembox}
\newlength{\semwidth}
\newlength{\boxwidth}
\newsavebox{\semrbox}
\newlength{\semrwidth}
\newlength{\boxrwidth}
\DeclareMathAlphabet{\mathpzc}{OT1}{pzc}{m}{it}
\newcommand{\bb}{\mathpzc{b}}
\newcommand{\wtH}{\widetilde{\mcH}}
\newcommand{\sm}{\textrm{sm}}
\title[Weak approximation over global fields]
{Weak approximation for cubic hypersurfaces and degree $4$ del Pezzo surfaces}
\author{Zhiyu Tian, Letao Zhang}
\address{
CNRS\\Institut Fourier, UMR 5582\\
100 Rue des Math\'ematiques, BP 74\\
38402, Saint-Martin d'H\`eres, France}
\email{zhiyu.tian@ujf-grenoble.fr}
\address{
Department of Mathematics\\ Stony Brook University\\
Stony Brook, NY 11794}
\email{letao.zhang@stonybrook.edu}
\date{\today}
\begin{document}


\begin{abstract}
In this article we prove the following theorems about weak approximation of smooth cubic hypersurfaces and del Pezzo surface of degree $4$ defined over global fields. (1) For cubic { hypersurfaces} defined over global function fields, if there is a rational point, then weak approximation holds at places of good reduction whose residual field has at least $11$ elements. (2)  For del Pezzo surfaces of degree $4$ defined over global function fields, if there is a rational point, then weak approximation holds at places of good reduction whose residual field has at least $13$ elements. (3) Weak approximation holds for cubic hypersurfaces of dimension at least $10$ defined over a global function field of characteristic not equal to $2, 3, 5$ or a purely imaginary number field. 
\end{abstract}


\maketitle

\tableofcontents

\section{Introduction}\label{sec:intro}

Let $K$ be a number field or the function field of a smooth projective geometrically connected curve $B$. Given a variety $X$ defined over $K$, a natural question is to understand the set of $K$-rational points $X(K)$ of $X$. For many types of $K$-varieties $X$, we study the set of rational points $X(K)$ by inspecting its local behavior over the completion $\wht{K}_{\nu}$ at each place $\nu$ of $K$. This naturally leads to the study of local-global principles (i.e. Hasse principle and weak approximation) for $X$.  
 The variety $X$ is said to satisfy {\emph{weak approximation}} if for every finite set of places of $K$ and points of $X$ over these places, given $N\in\NN$ we can find a $K$-rational point of $X$ approximating these points to the $N$-th order.

Smooth cubic hypersurfaces have served as a good testing ground for people to investigate questions related to local-to-global principles. 
For a smooth cubic hypersurface of dimension at least $15$ (or any other smooth hypersurface of sufficiently large dimension) defined over the function field $\FF_q(t)$, weak approximation is established in \cite{leeforms}. Recent work of  \cite{BrowningVisheCubic} shows that weak approximation holds for cubic hypersurfaces of dimension at least $6$ over $\FF_q(t)$ with $q$ not a power of $2$ or $3$. These results depend on the adaption of the circle method to the function field case.

Yong Hu \cite{HuYongWA} studies weak approximation for cubic surfaces defined over global function fields $K=\FF_q(B)$. He proves that if there is a rational point, then weak approximation holds at a single place of good reduction for $q \geq 53$ and the characteristic is not $2$ or $3$, and weak approximation at zero-th order for finitely many places of good reduction for $q\geq 11$. In this article we improve his result as the following.

\begin{thm}\label{thm:good}
Let $X$ be a smooth cubic hypersurface defined over the global function $K=\FF_q(B)$, the function field of a curve $B$ defined over a finite field $\FF_q$. Assume that $X$ has a $K$-rational point. Then $X$ satisfies weak approximation at places of good reduction whose residue fields have at least $11$ elements. In particular, weak approximation at places  of good reduction holds if $q \geq 11$.
\end{thm}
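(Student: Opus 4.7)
My plan would follow the Hassett--Tschinkel deformation strategy for sections of fibered varieties, with quantitative control that is sharp enough to work over residue fields of only $11$ elements. First, I would spread $X$ out to a flat proper model $\pi : \mcX \to B$ that is smooth over an open $U \subseteq B$ containing the finite set $S$ of places at which approximation is required. The hypothesized $K$-rational point gives a rational section, which after resolving indeterminacies extends to a honest section $\sigma_0 : B \to \mcX$. The weak approximation problem then reduces to a purely geometric statement: given any collection of $N$-th order jets along the points of $S$, produce a global section $\sigma : B \to \mcX$ realising those jets at the prescribed order.

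To modify $\sigma_0$ to match prescribed jets, I would follow the comb-and-smooth procedure. At each $b \in S$ I attach a vertical tail, i.e., a rational curve $C_b \subset X_b$ through $\sigma_0(b)$, and then deform this reducible section-plus-combs curve in the Kontsevich stable map space of $\mcX \to B$. Standard deformation theory (in the spirit of Graber--Harris--Starr and Hassett--Tschinkel) says that such a smoothing exists and its tangent space surjects onto the relevant space of $N$-jets at the chosen places, provided each tail $C_b$ is sufficiently free on the smooth fiber $X_b$, and provided one iterates the comb construction to boost positivity where necessary. The same inductive construction keeps the degree of the produced section under control so that the moduli problem remains tractable.

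The main technical obstacle is the local statement on fibers: on a smooth cubic hypersurface $Y$ over a finite field $k$ with $|k| \geq 11$, and for every $k$-point $x \in Y(k)$ together with a prescribed tangent direction (or higher jet) at $x$, there exists a very free rational curve on $Y$ through $x$ realising that tangent direction. The existence of a very free curve through a general point is classical for cubic hypersurfaces, but demanding the result at every $k$-rational point with only $11$ elements available is what forces a quantitative argument. I would attack this by studying the Fano scheme of low-degree rational curves (lines, conics, and unions thereof) on $Y$ through $x$, smoothing two-component combs of lines into free conics, and then applying a Lang--Weil type bound to the relevant incidence variety to ensure existence of a $k$-point once $|k| \geq 11$. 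This is the step where the precise constant $11$ would enter, and where the improvement over Hu's bound of $53$ must come from.

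Once the local positivity over each residue field of size $\geq 11$ is established, the passage from single-place to multi-place approximation is essentially formal: the jet-matching problem at the set $S$ reduces, by the surjectivity of the tangent map of the Kontsevich section space onto the product of local jet spaces, to non-emptiness of a fibered product of deformation spaces, which follows from the fiberwise very freeness of the tails combined with standard smoothing. Thus the main new content of the theorem is the sharp local geometry over fields of size $\geq 11$, which I expect to be the hardest step of the argument.
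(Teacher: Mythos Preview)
Your overall framework---spread out to a model, build combs on a section, smooth in the Kontsevich space---matches the paper's setup (Lemmas \ref{lem:preind1}--\ref{lem:key}). But your identification of where the constant $11$ enters is wrong, and this is a genuine gap. You propose to extract the bound $|k|\geq 11$ from a Lang--Weil estimate on the incidence variety of low-degree rational curves through a point of the fiber. Lang--Weil is asymptotic: it produces rational points only once $q$ exceeds an implicit constant depending on degree, dimension, and the embedding, and such constants are nowhere near $11$. Indeed, the paper's own Lang--Weil steps (Lemma \ref{lem:preind2}, together with the Koll\'ar--Szab\'o input \cite[Theorem 2]{KollarSzabo}) yield only an inexplicit threshold $m_0$, and Lemma \ref{lem:key} concludes merely that weak approximation of order $N$ holds over $\FF_{q^m}(B)$ for all $m\geq C_{N,L}$---not over $\FF_q(B)$ itself. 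Your plan would at best recover Hu's type of bound, not the sharp $11$.

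The idea you are missing is a Galois descent specific to cubics (Lemmas \ref{lem:line} and \ref{lem:reduction}): if weak approximation at order $N$ holds over $\FF_{q^{2k}}(B)$, then it already holds over $\FF_{q^k}(B)$. Given the formal sections $\whts_i$ over $\FF_q$, one chooses a line through $\whts_i(b_i)$ meeting the cubic fiber in a pair of $\text{Gal}(\FF_{q^2}/\FF_q)$-conjugate points, lifts to formal sections $\whts_i'$ over $\FF_{q^2}$, approximates those by a global section $s'$ over the quadratic extension (available by hypothesis), and then observes that the family of lines spanned by $s'$ and its Galois conjugate $s''$ is defined over $\FF_q$; its third intersection with $\mcX$ is a section over $\FF_q$ meeting the original jet conditions. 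The bound $q\geq 11$ enters exactly once, via Koll\'ar's explicit count \cite[Lemma 9.4]{KollarCubic} guaranteeing such a line through \emph{any} $\FF_q$-point of a smooth cubic. Iterating the descent from $\FF_{q^{2^M}}$ (where Lemma \ref{lem:key} applies) down to $\FF_q$ finishes the proof. So the constant $11$ comes from an elementary line-counting argument on the fiber, not from deformation theory or Lang--Weil.
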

 Here, a place $\nu$ of $K$ is a {\it place of good reduction} for a smooth cubic hypersurface $X$ defined over a global function field $K=\FF_q(B)$ if there exists a smooth projective morphism $\wht{\mcX} \to\SP(\wht{\mcO}_{\nu})$,  
such that the generic fiber is isomorphic to $X \times_K K_{\nu}$ and the closed fiber is a smooth cubic hypersurface, where $\OO_\nu$ is the ring of integers for the local field $K_\nu$.

The same method also applies to weak approximation for a smooth del Pezzo surface of degree $4$.  In this paper, we prove the following. 
\begin{thm}\label{thm:del}
Let $X$ be a smooth del Pezzo surface of degree $4$ defined over $K=\FF_q(B)$, the function field of a curve $B$ defined over a finite field $\FF_q$. Assume that $X$ has a $K$-rational point. Then $X$ satisfies weak approximation at places  of good reduction whose residue fields have at least $13$ elements. In particular, weak approximation at places of good reduction holds if $q \geq13$.
\end{thm}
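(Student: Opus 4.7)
The plan is to mirror the argument established for Theorem~\ref{thm:good}, adapted to the geometry of smooth del Pezzo surfaces of degree $4$. Fix a smooth projective model $\pi \colon \mcX \to B$ whose generic fiber is $X$, so that each special fiber $Y_\nu := \pi^{-1}(\nu)$ at a place $\nu$ of good reduction is a smooth del Pezzo surface of degree $4$ over the residue field $k_\nu$. By the hypothesis $X(K) \neq \emptyset$ together with Graber--Harris--Starr (applicable since $X$ is geometrically rational, hence rationally connected), a section $\sigma_0 \colon B \to \mcX$ exists. Following the Hassett--Tschinkel framework invoked in the proof of Theorem~\ref{thm:good}, weak approximation at a finite set $S$ of places of good reduction reduces to the following deformation-theoretic statement: for every $\nu \in S$, every $k_\nu$-point $p$ of $Y_\nu$, and every $N \geq 1$, one can deform a given section passing through $p$ to realize an arbitrary $N$-th order jet at $\nu$ while not perturbing the section at the other places of $S$.

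The standard mechanism is the comb construction: attach free rational curves in the special fibers $Y_\nu$ to $\sigma_0$ at the appropriate points, and smooth the resulting reducible curve to obtain a section with a more positive normal bundle and hence more flexible deformations. This requires producing, for each $k_\nu$-point $p \in Y_\nu$, a smooth rational curve $C \subset Y_\nu$ defined over $k_\nu$ through $p$ that is sufficiently free for the smoothing and whose subsequent deformations surject onto the space of $N$-jets at $p$. On a smooth del Pezzo surface of degree $4$, the natural candidates are smooth conics in the anticanonical embedding, which have anticanonical degree $2$, trivial normal bundle, and arise in several pencils on $Y_\nu$. Attaching two such conics meeting transversally at $p$ (or a single conic of anticanonical degree $\geq 3$, accessed by smoothing together a conic and a line) should produce curves whose first-order and higher-order deformations cover the required jet space.

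The main obstacle, and the source of the bound $|k_\nu| \geq 13$, is the arithmetic input: one must guarantee that through every $k_\nu$-point of $Y_\nu$ there passes a smooth $k_\nu$-rational conic avoiding the sixteen $(-1)$-curves and other degenerate loci, and moreover that two such conics meeting transversally at $p$ can be chosen over $k_\nu$. This reduces to a point count on the incidence variety of good conics through $p$, which is birational to $\PP^1$ or a product thereof; the threshold $|k_\nu| \geq 13$ should arise as the sharp bound past which an elementary count (removing the finitely many $k_\nu$-points of the degenerate locus from this $\PP^1$) guarantees a good conic exists. The increase from the bound $11$ in the cubic hypersurface case to $13$ here reflects the more constrained configuration of $(-1)$-curves and the smaller family of free rational curves of minimal degree available on a degree $4$ del Pezzo surface compared with a smooth cubic hypersurface.
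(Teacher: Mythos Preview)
Your proposal has a genuine gap: you are trying to run the Hassett--Tschinkel comb argument directly over the small residue field $k_\nu$, but that is not what happens in the proof of Theorem~\ref{thm:good}, and it cannot work as stated. After you assemble a comb over $k_\nu$, you still need to smooth it to an honest section, and this means finding a $k_\nu$-rational point in the relevant component of the moduli space of stable maps that corresponds to an irreducible curve. In the paper this is done via the Lang--Weil estimate (Lemma~\ref{lem:preind2} and Lemma~\ref{lem:key}), which only kicks in once the ground field is large enough---this is the constant $C_{N,L}$ in Lemma~\ref{lem:key}, and it is not $13$. Producing free conics through every $k_\nu$-point, even if you could do it for $|k_\nu|\geq 13$, does not by itself give you a rational point in the space of sections; it only gives you a smooth point of the compactified moduli space.

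The paper's actual strategy is two-layered, exactly as for cubics. First, Lemma~\ref{lem:key} yields weak approximation over $\FF_{q^m}(B)$ for all $m$ exceeding some large constant depending on $N$ and $L$; this step uses only that the generic fiber is separably rationally connected and has nothing to do with the bound $13$. Second, one descends from a large extension back to $\FF_q$ using a geometric trick specific to degree~$4$ del Pezzos, namely Proposition~\ref{lem:plane}: for $q\geq 13$, through any $\FF_q$-point $x$ of a smooth degree~$4$ del Pezzo in $\PP^4$ there is a plane $\Pi$ over $\FF_q$ with $\Pi\cap X=\{x,y,y',y''\}$ where $y,y',y''$ are Galois-conjugate $\FF_{q^3}$-points. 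This plays the role that the secant line (Lemma~\ref{lem:line}) plays for cubics, and it furnishes a descent from $\FF_{q^{3k}}(B)$ to $\FF_{q^k}(B)$ paralleling Lemma~\ref{lem:reduction}. The threshold $13$ comes entirely from the point counts in Lemmas~\ref{lem:hyperplane} and~\ref{lem:lineInPlane} used to prove Proposition~\ref{lem:plane}---specifically, from finding a line in $\PP^2$ meeting an integral plane cubic in a Galois-irreducible triple of $\FF_{q^3}$-points---and not from any count of conics on the surface.
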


Here, the definition of places of good reductions is similar to those of cubic hypersurfaces. Namely we require the existence of a  smooth projective morphism $\wht{\mcX}\to\SP \whtOO_{\nu}$  whose central fiber is a smooth del Pezzo surface of degree $4$ (as opposed to any smooth surface).

As a side remark, the anticanonical system gives an embedding of a del Pezzo surface of degree $4$ into $\PP^4$ as the complete intersection of two quadrics. 
Hasse principle and weak approximation for a smooth complete intersection of two quadrics in $\PP^n, n \geq 5$ is recently proved  in  \cite{Hasse}(the argument of weak approximation is a geometric argument due to Colliot-Th\'el\`ene, Sansuc, and Swinnerton-Dyer, \cite{CTSD}, \cite{CTSDquadric}).

Partial results of weak approximation for cubic hypersurfaces in $\PP^n$ over a number field are known either by the descent and fibration methods (\cite{HarariFibration, HarariHasse, CTSalbergerSingularCubic})
or the circle method (\cite{SkinnerWA, SJMWA}). 

In this paper we add the following new result to the list by a geometric argument.

\begin{thm}\label{thm:function}
Let $X$ be a smooth cubic hypersurface of dimension at least $10$ defined over a global field $K$, which is either a purely imaginary number field or $\FF_q(B)$, the function field of a smooth projective geometrically connected curve $B$ defined over a finite field $\FF_q$ whose characteristic is not $2, 3, 5$. Then $X$ satisfies weak approximation.
\end{thm}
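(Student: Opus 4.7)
The plan is to combine Theorem \ref{thm:good} (for the function field case) with geometric input coming from the rich structure of smooth cubic hypersurfaces of dimension $n \geq 10$. The organizing principle is to reduce weak approximation for $X$ to interpolation of local data by a single rational curve on $X$ defined over $K$: a smooth rational curve $C \cong \PP^1_K$ itself satisfies weak approximation by the Chinese remainder theorem, so producing such a $C$ passing near the prescribed local data at a given finite set of places $S$ immediately yields the required approximating $K$-rational point of $X$.

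Concretely, given a finite set $S$ of places, local data $\{x_\nu \in X(\wht{K}_\nu)\}_{\nu \in S}$, and approximation order $N$, I would first produce for each $\nu$ a local rational curve $C_\nu \subset X_{\wht{K}_\nu}$ through $x_\nu$ with the prescribed jet; this is straightforward since smooth cubic hypersurfaces are unirational and carry many lines and conics through any smooth point. The task then reduces to gluing the $C_\nu$ into a single rational curve $C \subset X$ defined over $K$, which I would approach by working on the moduli space $\mcM$ of rational curves on $X$ of a sufficiently large fixed degree satisfying the prescribed $S$-local approximation constraints.

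The key geometric input is that for $n \geq 10$ and degree $e \gg 0$, smooth cubic hypersurfaces are rationally simply connected in the sense of de Jong--Starr: the evaluation morphism from the space of degree-$e$ rational curves to $X^k$ has rationally connected general fibers at marked points. The dimension bound $n \geq 10$ and the characteristic restrictions (not $2, 3, 5$) are precisely the hypotheses needed for these statements and for the companion statements on the Fano varieties of lines and conics on $X$. Combined with a descent argument, this implies that $\mcM$ itself is rationally connected over $K$. One then obtains a $K$-rational point of $\mcM$ by the Graber--Harris--Starr theorem (after spreading out over $B$) in the function field case, and by its arithmetic analogue over purely imaginary number fields in the number field case, where the absence of real places removes any archimedean obstruction.

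The main obstacle is the handling of places of bad reduction of $X$, where Theorem \ref{thm:good} does not apply. The strategy above sidesteps this by working intrinsically on $X/K$: the moduli space $\mcM$ is defined over $K$ and its rational connectedness is a property of its geometric generic fiber, independent of any integral model. The delicate point is to verify that imposing the $S$-local approximation constraints at all places of $S$ (including bad reduction places) preserves rational connectedness of $\mcM$; this is where the hypothesis $n \geq 10$ is essential, providing enough parameters so that the constrained moduli space retains a rationally connected component of the expected dimension, and where the absence of real places and the exclusion of $\mathrm{char}\,2, 3, 5$ ensure that no arithmetic or geometric pathology intervenes.
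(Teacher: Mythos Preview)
Your proposal has a genuine gap at its central step. You write that one obtains a $K$-rational point of the moduli space $\mcM$ ``by the Graber--Harris--Starr theorem (after spreading out over $B$) in the function field case, and by its arithmetic analogue over purely imaginary number fields in the number field case.'' No such arithmetic analogue is known: the existence of rational points on rationally connected varieties over number fields, imaginary or not, is wide open, and the absence of real places does not change this. Even in the function field case, Graber--Harris--Starr requires the base field of the curve to be algebraically closed; over $\FF_q(B)$ one would have to patch in Esnault-type results on rational points over finite fields, and it is not clear that this yields a point of $\mcM$ satisfying the prescribed $S$-local constraints. The vague claim that imposing these local constraints ``preserves rational connectedness'' is not justified by the de Jong--Starr machinery, which concerns geometric fibers of evaluation maps, not jet conditions at arithmetic places.

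The paper's argument is entirely different and never invokes Theorem~\ref{thm:good} or any moduli-space rational-connectedness statement. It works by hand: fix a general $K$-point $x$; approximate the family of lines through $x$ (a projective space, so weak approximation is automatic) to produce a $K$-line $M$ whose residual intersection $M\cap X=\{x,y,z\}$ is close to the local data; then, using the birational involution $t_x$ and the explicit geometry of the tangent hyperplane sections $H_x\cap X$ and $H_z\cap X$ (Lemmas~\ref{lem:singular} and~\ref{lem:Rconnected}), build a $K$-rational curve through $x$ and $y$. When $y,z$ are only defined over a quadratic extension $L/K$, a Weil-restriction trick descends the construction to $K$. The only arithmetic inputs are that smooth quadrics of dimension $\geq 3$ and smooth cubics of dimension $\geq 8$ over such $K$ have Zariski-dense rational points---facts that are actually available, unlike the one you need.
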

\noindent Note that a smooth projective cubic hypersurface of dimension at least $8$ defined over a global field always has a rational point (the number field case is settle by \cite{BrowningVishe}).

The proof of the above theorems falls into two directions. The proof of Theorem \ref{thm:good}  and Theorem \ref{thm:del} is based on the deformation theory of rational curves as developed by Koll\'ar-Miyaoka-Mori \cite{KMM92RC}. The proof of Theorem \ref{thm:function} is on the other hand from geometric manifestation. \\


\textbf{Acknowledgments.} The authors are grateful to Brendan Hassett and Jason Starr for very 
useful comments and discussions.

\section{Preliminaries}\label{sec:pre}

 In this section we will recall some important notions and constructions.


\subsection{Formulation of weak approximation over function fields.}  Let $k$ be a field, either algebraically closed or finite, and let $B$ be a smooth projective curve over $k$, with function field $K = k(B)$. We assume $X$ is a smooth proper variety over $K$. 
\begin{defn}\label{def:model}
A \emph{model} of $X$ is a flat projective morphism $\pi:\mcX \rightarrow B$  with generic fiber isomorphic to $X$.  
\end{defn}
\noindent Each section of the model corresponds to a $K$-rational point of $X$ , and vice versa. 

Now we will unwind the definition of weak approximation for the case where $X$ is a smooth proper variety over $K$, and give the geometric formulation of $X$ satisfies weak approximation,  which is  equivalent to the definition in Section \ref{sec:intro}. For detailed discussion one may find \cite[Section 1]{WASurvey} very helpful. 

Given a sequence $(\nu_1,\ldots,\nu_l)$ of finitely many places of $K$, each $\nu_i$ corresponds to a closed point $b_i\in B$.  Let $\whtOO_{B,b_i}$ denote the completion of the local ring $\OO_{B,b_i}$ at the maximal ideal $\mfm_{B,b_i}$, 
and let $\wht{K}_{b_i}$ be the completion of $K = k(B)$ at $b_i$. Then $X$ satisfies weak approximation over $(\nu_1,\ldots,\nu_l)$  if  the image of  the  diagonal map  $X(K)\rightarrow \prod_{i}X(\wht{K}_{\nu_i})$ is dense, where  $ \prod_{i}X(\wht{K}_{\nu_i})$  takes the product topology of the $\nu$-adic topologies.  Now let us fix a model $\pi:\mcX\to B$,  a place $\nu_i$, and a section $\whts_i$ of the restriction  
\[
\pi|_{\wht{B}_{b_i}}:\mcX\times_B \wht{B}_{b_i}\rightarrow \wht{B}_{b_i}, \wht{B}_{b_i}= \SP \whtOO_{B, b_i}. 
\]
\noindent Note that $\whts_i$ corresponds to a point of  $X(\wht{K}_{b_i})$.

For each $N\in\NN$, basic $\nu_i$-adic open subsets of an open neighborhood of $\whts_i$ in $X(K_{\nu_i})$ consist of 
sections $t$ of $\pi|_{\wht{B}_{b_i}}$, such that $t$  is congruent to $\whts_i$ modulo $\mfm_{B,b_i}^{N+1}$. 
We can reformulate 
weak approximation in the following way.
\begin{defn}\label{defn:wa} Let $X$ be a smooth projective variety over the function $K=k(B)$ of a curve $B$ defined over $k$, where $k$ is either algebraically closed or finite. 
We say $X$ satisfies \emph{weak approximation at order $N$} if there exists a model $\pi:\mcX\to B$  of $X$ such that 
 for  $J$-data:
\[
J =\left(N; \left(b_1,\ldots,b_l \right)_L; \whts_1,\ldots,\whts_l\right),
\]
consisting of a nonnegative integer $N$,  a sequence of distinct closed points $\left(b_1,\ldots,b_l \right)_L$ $\subset B$  such that 
$ \sum \deg [\kappa(b_i):k] = L$,  and a formal power series sections $\left(\whts_1,\ldots,\whts_l\right)$ where each $\whts_i$ is a section of the restriction  
\[
\pi|_{\wht{B}_{b_i}}:\mcX\times_B \wht{B}_{b_i}\rightarrow \wht{B}_{b_i},
\]
\noindent there exists a regular section $\sigma$ of $\pi$ agreeing with $\whts_i$ modulo $\mfm_{B,b_i}^{N+1}$ for each i. We say $X$ satisfies {\emph{weak approximation}} if  $X$ satisfies weak approximation at any order $N\geq 0$.

We say that $X$ satisfies weak approximation at places of good reduction (of order $N$ for some fixed $N\geq 0$) if this holds for any choices of sequence $(b_1,\ldots,b_l)_L$ which correspond to places of good reduction of $X$ over $k(B)$ (for the given $N$).
\end{defn}


\subsection{Iterated blow-ups}\label{sec:iterated}
Let $X$ be a smooth projective variety over  $K=k(B)$ of dimension $n$. 
{Assume we have a model $\pi:\mcX\to B$ of $X$ with a section $\sigma:B\to\mcX^{sm}$,  where $\mcX^{sm}$ is the smooth locus of $\pi$. }

Given the following $J$-data associated with the model
\[
J =\left(N; \left(b_1,\ldots,b_l \right)_L; \whts_1,\ldots,\whts_l\right),
\]
to find a section of the model $\pi$ agreeing with $(\whts_1,\ldots,\whts_l)$ to the $N$-th order is the same as finding a section in  
the $N$-th iterated blowup associated with $J$ (Proposition 11, \cite{HT06}). To be precise, the  {\it iterated blow-up}
\[
\beta(J):\mcX(J)\to\mcX
\]
is obtained by performing  a sequence of blow-ups:  blow up $\mcX$ successively $N$ times, where at each stage the center is the point at which the strict transform of $\whts_i$ meets the fiber over $b_i$. Observe that at each stage we blow up a smooth point of the fiber of the corresponding model and that the result does not depend on the order of the $b_i$. The procedure is depicted  in Section \ref{sec:comb} Figure \ref{fig:teeth}.

The fiber of $\mcX(J)$ over $B$ decomposes into irreducible components
\[
\mcX(J)_{b_i} = E_{i,0}\cup E_{i,1}\cup\dots\cup E_{i,N}
\]
where
\begin{itemize}
	\item $E_{i,0}$ is the strict transform of $\mcX_{b_i}$. 
	\item $E_{i,k}=\textrm{Bl}_{r_{i,k}}(\PP^n)$, $k = 1,\ldots, N-1$, is the blow-up of the intermediate exceptional divisor $\PP^n$ at $r_{i,k}$, the point where the strict transform of $\whts_i$ evaluated at $b_i$ of the $k$th blow-up. Not that we will denote by $\mcX(J_k)$ the $k$-th blow-up with respect to the $J$ data and denote by $\beta(J_k)$ the corresponding blow-up map $\mcX(J_k)\to\mcX$
	\item $E_{i,N}\cong\PP^n$ is the $N$-th exceptional divisor. 
	\item The intersection $E_{i,k}\cap E_{i,k+1}$ is the exceptional divisor $\PP^{n-1}\subset E_{i,k}$, and a strict transform of a hyperplane in $E_{i,k+1}$ for $k=0,\ldots, N-1$. 
	\item  Each $E_{i,k}$ is a $\PP^1$-bundle over the exceptional divisor $\PP^{n-1}$ for $k=1,\ldots, N-1$.
\end{itemize}
\noindent Let $r_i\in E_{i,N}\backslash E_{i,N-1}$ denote the intersection of the strict transform of $\whts_i$ with $E_{i,N}$, and   denote the morphism of $k$-the blow-up over $B$ as
\[
\beta(J_k): \mcX(J_k)\to B.
\]
For each section $\sigma'$ of $\beta(J_k):\mcX(J_k)\to B$, the composition $\beta(J_k)\circ \sigma'$ is a section of $\pi:\mcX\to B$. Conversely, given a section $\sigma'$ of $\beta(J_k)$, its strict transform in $\mcX(J_k)$ is the unique section of $\mcX(J_k)\to B$ lifting $\sigma'$. In particular, given  a section $\sigma':\mcX(J)\to B$ with $\sigma'(b_i)=r_i$, that is the section intersects the strict transform of $\whts_i$, yields a section $\sigma$ of $\mcX\to B$ where $\sigma\equiv \whts_i\mod \mfm^{k+1}_{B, b_i}$, for $i = 1,\ldots,l$ and for $k = 0,\ldots, N$.


\section{Proof of Theorem \ref{thm:good}}\label{sec:cubic}
One of the main ingredients in our proof is the deformation technique developed by Koll\'ar-Miyaoka-Mori \cite{KMM92RC},
which was later used by Hassett-Tschinkel \cite{HT06} to prove weak approximation at places of good reduction of rationally connected varieties defined over $\CC(B)$, 
the function field of a complex curve. 
Later Yong Hu \cite{HuYongWA} applied this method to prove weak approximation of order $0$ at places of good reduction for cubic surfaces over global function fields.
Our approach is motivated by their works. 
The key to prove weak approximation of an arbitrary order is to show uniform boundedness results for the construction of combs, which we prove in the following section.


\subsection{Construction of bounded family of combs.}\label{sec:comb} 
\begin{defn}\label{defn:comb}
Let $k$ be a field. A {\it comb with $n$ teeth} over $k$ is a nodal curve $T$ over $k$, where $T$ is a union of  $m+1$ curves:  a curve $D$ and the union of $n$ curves  $T_1\cup\dots\cup T_n$ satisfy the following conditions:
\begin{itemize}
\item $D$ is a smooth and geometrically irreducible curve, defined over $k$.
\item Each $\bar{T}_i:=T_i\otimes_k \bar{k} $ is  a chain of $\PP_{\bar{k}}^1$'s. 
\item The union $T_1\cup\dots\cup T_n$ is defined over $k$; however individual curve may not be defined over $k$.
\item Each $\bar{T}_i$ meets $\bar{D}:=D\times_k\bar{k}$ transversally in a single smooth point of $\bar{D}$; however the point may not be defined over $k$.
\item $\bar{T}_i\cap \bar{T}_j = \emptyset$,  for all $i\neq j$.
\end{itemize}
\end{defn}
\noindent Here the curve $D$ is called the {\it handle} of the comb and each $T_i$ a {\it tooth}.

The construction of comb is crucial to produce sections passing through some given points. The following lemmas will produce bounded family of sections over an algebraically closed field (cf. Lemma \ref{lem:preind1}) and then over a finite field (cf. Lemma \ref{lem:preind2}). 

\begin{lem}\label{lem:preind1} 
Let $\pi:\mcX\to B$ be a flat projective family defined over an algebraically closed field $k = \bar{k}$, such that 
 the generic fiber is smooth projective and separably rationally connected. 
Let $\mcH$ be an ample divisor on $\mcX$.
For any triple of positive integers $\left(d,L,N\right)$,
there exists an integer $r:=r(d, L, N)>0$ such that for any section $\sigma:B\to\mcX^\sm$ in the smooth locus of $\pi$,  $\deg_\mcH\left(\sigma\right)\leq d$ and any  sequence of distinct closed points $\left(b_1,\ldots,b_L\right)\subset B$, 
 we can construct a comb $T$  together with a morphism $f:T\to  \mcX$ which satisfies the following conditions. 
\begin{enumerate}
	\item The comb $T$ takes  $B$ as its handle and the morphism $f$ restricted to $B$ is the section $\sigma: B \to \mcX$.
	\item The comb $T$ has $r$-teeth mapping to $r$ very free curves
$\left\{T_1,\ldots,T_{r}\right\}$ with bounded $\mcH$-degree in general fibers outside $\left(b_1,\ldots,b_L\right)$.
      \item The morphism $f: T \to \mcX$ is an immersion. In particular, the normal sheaf $\mcN_T$, defined as the dual of the kernal of the surjection $f^*\Omega_\mcX \to \Omega_T$, is locally free.
	\item If the fiber dimension is at lease $3$, then we may choose the  immersion $f: T \to \mcX$  to be an embedding.
	\item $H^1\left(B, \mcN_{T}|_B\left(-\left(N+1\right)\left( \sum b_i \right)\right)\right) = 0$. 
	\item The sheaf $\mcN_{T}|_B\left(-\left(N+1\right)\left( \sum b_i \right)\right)$ is globally generated.
\end{enumerate}
\end{lem}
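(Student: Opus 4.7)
My plan is to graft onto the section $\sigma(B)$ a large number $r$ of very free rational curves lying in general fibers of $\pi$, one at each of $r$ carefully chosen closed points of $B\setminus\{b_1,\dots,b_L\}$, and then to verify that the resulting comb $T$ together with its map $f\colon T\to\mcX$ has the required properties. The integer $r$ will depend only on $(d,L,N)$ because the slope increase per tooth is controlled and because the family of sections of $\mcH$-degree at most $d$ in the fixed model is itself bounded.

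\textbf{Construction of the comb.} Since the generic fiber of $\pi$ is smooth and separably rationally connected, \cite{KMM92RC} produces a bounded family of very free rational curves in the general fibers. By assembling combs of such curves inside a general fiber and smoothing, I may, at the cost of enlarging the $\mcH$-degree bound by a constant depending only on $(\pi,\mcH,N,L)$, arrange that through every general point of every general fiber there passes a smooth rational curve whose normal bundle in the fiber splits into summands of arbitrarily large degree (specifically, each of degree at least $2g(B)+(N+1)L+1$). The locus of $b\in B$ above which this holds, and above which $\sigma(b)$ is general in its fiber, is cofinite, so I can pick $r$ distinct closed points $p_1,\dots,p_r\in B$ disjoint from $\{b_1,\dots,b_L\}$. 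Through each $\sigma(p_j)$ I select such a very free curve $T_j\subset\mcX_{p_j}$ and set $T=B\cup_{p_j}T_j$ with $f|_B=\sigma$ and $f|_{T_j}$ the inclusion. Because $\sigma$ is transverse to $\pi$ while each $T_j$ lies in a fiber, the two branches of $T$ at each node map to transverse directions, so $f$ is unramified, and injectivity on closed points is immediate since the $p_j$ are distinct; hence $f$ is an immersion. When the relative dimension is at least $3$, a general choice of the $T_j$ makes them pairwise disjoint with generic tangent directions, so $f$ becomes an embedding.

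\textbf{Normal sheaf and positivity.} A standard local computation at the nodes (cf.\ \cite{HT06}) identifies $\mcN_T|_B$ as a positive elementary modification of $\mcN_\sigma$: away from the nodes it agrees with $\mcN_\sigma$, and at each $p_j$ the tangent direction of $T_j$ contributes one extra degree, so there is an exact sequence
\[
0\to \mcN_\sigma \to \mcN_T|_B \to \bigoplus_{j=1}^{r} k_{p_j} \to 0.
\]
Combined with the very-freeness of the $T_j$ and generic position of the $p_j$, each such elementary modification raises $\mu_{\min}$ of the Harder--Narasimhan filtration by a controlled positive amount, which is the mechanism propagating very-freeness along the handle. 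Since sections of $\mcH$-degree at most $d$ lie in a bounded family, $\mu_{\min}(\mcN_\sigma)$ is bounded below by a constant $c(d)$. Choosing $r=r(d,L,N)$ large enough then forces
\[
\mu_{\min}\bigl(\mcN_T|_B\bigl(-(N+1)\textstyle\sum b_i\bigr)\bigr) \;\geq\; 2g(B),
\]
which yields both $H^1=0$ and global generation, proving (5) and (6).

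\textbf{Main obstacle.} The serious technical point is the uniform slope estimate: total degree grows linearly in $r$ for free, but extracting an effective lower bound on $\mu_{\min}$ after $r$ elementary transformations requires a careful analysis of how very-free teeth attached at general points interact with the Harder--Narasimhan filtration of $\mcN_\sigma$, uniformly in $\sigma$ of bounded degree. This is where boundedness of the family of $\mcN_\sigma$'s, and the use of very free rather than merely free teeth with large normal-bundle summands, both enter essentially, and it is the step that requires the most care.
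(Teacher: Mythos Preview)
Your overall strategy is sound and reaches the same conclusion, but it is organized differently from the paper's argument. The paper does not track Harder--Narasimhan slopes at all. Instead, it first observes that for any \emph{fixed} section $\sigma$ and any \emph{fixed} line bundle $\mcL\in\Pic^{(N+1)L}(B)$, attaching sufficiently many very free teeth at general points in general directions kills $H^1(B,\mcN_T|_B\otimes\mcO(-\mcL))$ and makes the twist globally generated; this is the standard Koll\'ar--Miyaoka--Mori comb-smoothing input. The uniformity of $r$ over all $\sigma$ of degree $\le d$ and over all choices of $(b_1,\dots,b_L)$ is then obtained by an upper-semicontinuity/Noetherianity argument on the parameter space of pairs $(\sigma,\mcL)$: the locus where a given $r$ suffices is open, and the base is of finite type. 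Your route trades this soft finiteness argument for an explicit slope bound, which has the advantage of being more quantitative but requires more bookkeeping on the normal bundle.

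One genuine imprecision in your write-up should be fixed: it is not true that ``each such elementary modification raises $\mu_{\min}$ by a controlled positive amount.'' A single upper elementary transformation at a general point in a general direction raises the total degree by one but need not move $\mu_{\min}$ at all (e.g.\ $\mcO\oplus\mcO$ on $\PP^1$ becomes $\mcO\oplus\mcO(1)$). What is true, and what your argument actually needs, is that after roughly $\mathrm{rk}(\mcN_\sigma)$ such modifications at general points in general directions the minimal slope increases by at least one; equivalently, each generic modification either strictly drops $h^1$ of the relevant twist or forces the direction to lie in a proper subspace. With this correction, choosing $r$ of order $\mathrm{rk}\cdot\bigl(2g(B)+(N+1)L-c(d)\bigr)$ gives the bound $\mu_{\min}\ge 2g(B)$ you want, and then (5) and (6) follow as you say. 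The ``main obstacle'' you flag is exactly this point, and it is handled in the literature by the $h^1$-drops argument rather than by a per-step slope increment.
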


\begin{proof}
In our proof, all degrees are taken with respect to the ample divisor $\mcH$.
In general, by attaching sufficiently many very free curves to $\sigma(B)$  in general fibers outside $\left(b_1,\ldots,b_L\right)$, one can always obtain a comb that satisfies all conditions in our Lemma. Note that the existence of very free curves is guaranteed by the separable rational connectedness of general fibers of $\pi$.  

Furthermore, the fixed number $r$ of very free curves one needs to attach so that the conditions are satisfied for \emph{all} choices of $b_1, \ldots, b_L$ follows from an upper-semi-continuity argument. 
Namely, there exists an integer $r>0$, such that for any section $\sigma: B\to\mcX$ of degree at most $d$ and for any line bundle $\mcL\in\Pic^{(N+1)\cdot L}(B)$, we can construct a comb $T$ with handle $\sigma(B)$, and teeth $r$ very free curves $\left\{T_1,\ldots,T_{r}\right\}$ of bounded degree attached to $\sigma(B)$ in general fibers along general directions such that
\[
H^1(B, \mcN_{T}|_B\otimes\mcO(-\mcL)) = 0,
\]
and the normal bundle $\mcN_{T}|_B\left(-\mcL\right)$ is globally generated.
Furthermore we may choose the morphism to be an immersion (or an embedding if the fiber dimension is at least $3$).

So given a sequence of distinct closed points $(b_1,\ldots,b_L)\subset B $, we may choose general $\left\{T_1,\ldots,T_{r}\right\}$ away from the fibers over the points $b_i$, such that
\[
H^1\left(B, \mcN_{T}|_B\left(-\left(N+1\right)\left( \sum b_i \right)\right)\right) = 0, 
\]
and the normal bundle $\mcN_{T}|_B\left(-\left(N+1\right)\left( \sum_{i}b_i \right)\right)$ is globally generated.
We have lots of freedom in choosing the very free curves $T_i$. In particular we can always choose the $T_i$ to be immersions,  to have bounded degree, and to intersect the section $\sigma$ transversely.
\end{proof}

Then we can use the Lang-Weil estimate to prove the following boundedness result over finite fields.

\begin{lem}\label{lem:preind2} 
Let $\pi:\mcX\to B$ be a flat projective family over a finite field $\FF_q$,  such that 
 the generic fiber is smooth projective and separably rationally connected. 
Let $\mcH$ be an ample divisor on $\mcX$.
For any triple of positive integers $\left(d,L,N\right)$,
there exist integers $r>0$,  $m_0> 0$, and $d>0$ 
such that for any $m\geq m_0$, any section $\sigma:B\to\mcX^\sm$ defined over $\FF_{q^m}$, $\deg_\mcH\left(\sigma\right)\leq d$ 
and any sequence of distinct closed points $\left(b_1,\ldots,b_l\right)_L\subset B$ defined over $\FF_{q^m}$, 
where $ \sum  \deg [\kappa(b_i):\FF_q]\leq L$, 
we can construct a comb $T$  together with a morphism { $f:T\to \mcX$}
satisfying the following conditions. 
\begin{enumerate}
	\item The comb $T$ takes  $B$ as its handle and the morphism $f$ restricted to $B$ is the section $\sigma: B \to \mcX^\sm$.
	\item The comb $T$ has $r$ teeth mapping to $r$ very free curves
$\left\{T_1,\ldots,T_{r}\right\}$ with bounded $\mcH$-degree in general fibers outside $\left(b_1,\ldots,b_l\right)$.
\item The morphism $f: T \to \mcX$ is an immersion. In particular, the normal sheaf $\mcN_T$, defined as the dual of the kernal of the surjection $f^*\Omega_\mcX \to \Omega_T$, is locally free.
	\item If the fiber dimension is at lease $3$, then we may choose the  immersion $f: T \to \mcX$  to be an embedding.
	\item $H^1\left(B, \mcN_{T}|_B\left(-\left(N+1\right)\left( \sum b_i \right)\right)\right) = 0$. 
	\item The sheaf $\mcN_{T}|_B\left(-\left(N+1\right)\left( \sum b_i \right)\right)$ is globally generated.
\end{enumerate}

\end{lem}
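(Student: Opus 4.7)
The plan is to reduce to Lemma \ref{lem:preind1} by base-changing the comb construction to $\overline{\FF_q}$ and then descending to a sufficiently large finite extension of $\FF_q$ via the Lang-Weil estimate. The integer $r=r(d,L,N)$ will be exactly the one supplied by Lemma \ref{lem:preind1}, since it depends only on the numerical data $(d,L,N)$ and not on the ground field; only the existence of a uniform $m_0$ requires new input.

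I would set up a parameter space over $\FF_q$ as follows. Sections $\sigma:B\to\mcX^{\sm}$ with $\deg_\mcH\sigma\le d$ form a quasi-projective $\FF_q$-scheme $\mcS$, and tuples $(b_1,\ldots,b_l)_L$ of distinct closed points of total degree at most $L$ are parametrized by a finite-type $\FF_q$-scheme $\mcP$. Over $\mcS\times\mcP$, build a finite-type $\FF_q$-scheme $\mcC$ whose fiber over $(\sigma,(b_i))$ parametrizes ordered $r$-tuples of very free rational curves of bounded $\mcH$-degree lying in fibers of $\pi$ away from the $b_i$, transversally attached to $\sigma(B)$, and making the resulting comb satisfy conditions (3)-(6). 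Conditions (3)-(6) are open by semicontinuity of $h^1$, openness of the immersion/embedding locus, and openness of fiberwise surjectivity of vector bundles, so $\mcC$ sits as an open subscheme of an appropriate $r$-fold fiber product of Kontsevich or Hilbert schemes. By Lemma \ref{lem:preind1}, every geometric fiber of $\mcC\to\mcS\times\mcP$ is nonempty, and in fact contains a geometrically irreducible open subscheme of dimension bounded below uniformly in $(\sigma,(b_i))$: on a separably rationally connected smooth projective variety, the moduli of very free curves of bounded degree through a given point is smooth and geometrically irreducible of dimension controlled by the degree bound and the ambient dimension, and the fibers of $\pi$ vary in a bounded family over $\FF_q$.

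Applying the Lang-Weil estimate to these geometrically irreducible components of the fibers of $\mcC\to\mcS\times\mcP$ yields a uniform $m_0=m_0(d,L,N)$ such that for every $m\ge m_0$ and every $\FF_{q^m}$-point of $\mcS\times\mcP$, the corresponding fiber of $\mcC$ acquires an $\FF_{q^m}$-point; the comb $T$ associated to such a point is then defined over $\FF_{q^m}$ and satisfies (1)-(6). The principal obstacle is making Lang-Weil uniform across the family $\mcC\to\mcS\times\mcP$: one needs a common lower bound on the dimension of a geometrically irreducible component of each fiber and a common upper bound on the Lang-Weil error. Both follow from the quasi-projectivity and bounded degree of the moduli spaces involved, since the Hilbert polynomials of the relevant components vary in a bounded family and the implicit constants in Lang-Weil are therefore uniform in $(\sigma,(b_i))$.
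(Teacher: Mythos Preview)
Your overall strategy---build a parameter space over the base of pairs $(\sigma,(b_i))$ and apply Lang--Weil uniformly in the family---is the paper's. The gap is in \emph{what} you parameterize and in the irreducibility claim you make for it.

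You let the fiber of $\mcC$ parameterize $r$-tuples of very free rational curves themselves, and then assert that ``the moduli of very free curves of bounded degree through a given point is smooth and geometrically irreducible.'' This is false already on $\PP^n$ (each degree $1,2,\ldots,d_0$ contributes its own component), and for a general separably rationally connected fiber there is no reason for the space of very free curves of any fixed or bounded degree through a point to be irreducible. Retreating to ``some geometrically irreducible component'' does not help: you would then need that component to be defined over the ground field and to vary in a family over $\mcS\times\mcP$, which you have not arranged. Without geometric irreducibility of the fibers, the uniform Lang--Weil step does not go through.

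The paper sidesteps this by decoupling. Its fiber over $(\sigma,(b_i))$ records only the attachment points $x_1,\ldots,x_r\in B$ and the relative tangent directions $v_j\in(\mcT_{\mcX}^{\text{rel}})|_{\sigma(x_j)}$---not the curves. Since $\mcN_T|_B$ is the elementary transformation of $\mcN_\sigma$ at the $x_j$ in the directions $v_j$, conditions (5) and (6) depend only on this data and cut out a nonempty open subset of an irreducible bundle over an open in $B^r$; geometric irreducibility of each fiber is then automatic, and Lang--Weil gives an $\FF_{q^m}$-rational choice of $(x_j,v_j)$ for all $m\ge m_0'$. The very free immersed curves through $\sigma(x_j)$ with prescribed tangent $v_j$ are produced \emph{separately}, over large enough finite fields, by \cite[Theorem~2]{KollarSzabo}; this supplies a second threshold $a_0$ and the uniform degree bound $d_0$, and one takes $m_0=\max(m_0',a_0)$. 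Your sketch omits this second ingredient entirely.
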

\begin{proof} In our proof, all degrees are taken with respect to the ample divisor $\mcH$. 
Let $\pi:\mcX\to B$ be a model over a finite field $\FF_q$, let $S$ denote places of bad reduction, and let $(d,L,N)$ be a triple of positive integers. Let $r$ be the number of very free curves obtained from Lemma \ref{lem:preind1} for the model $\pi:\mcX\to B$ over $\bar{\FF}_q$  and the triple  $(d,L,N)$. 

Consider the space $\mcS$ parameterizing the following data: 
\[
\left\{
\begin{array}{l}      
	\textrm{a section }{\sigma:B\to\mcX^\sm}, \deg(\sigma)\leq d;\\
	\textrm{a collection of }L\textrm{ distinct points }  (b_1,\ldots,b_L)\subset \underbrace{B \times \ldots \times B}_{L};\\
	\textrm{a collection of }r\textrm{ distinct points }  \{x_1,\ldots,x_r\}\in \underbrace{B \times \ldots \times B}_{r};\\
	\textrm{relative tangent directions } (v_1, \ldots, v_r) \in \left( (\mcT_{\mcX}^{\text{rel}})|_{\sigma(x_1)},\ldots, (\mcT_{\mcX}^{\text{rel}})|_{\sigma(x_r)}\right);\\
\end{array}
\right\}
\]
subject to the following requirements:
\begin{itemize}
\item
The fiber $\mcX_{b_i}$ over each $b_i$ is smooth and separably rationally connected.
\item
The two sets of points $\{x_j\}_{j=1}^{r}$ and $\{b_i\}_{i=1}^{L}$ has empty intersection.
\item
One can attach $r$ very free curves of bounded degree to $\sigma(B)$ at $\{x_j\}_{j=1}^r$ along the tangent directions $(v_1, \ldots, v_r)$ to obtain a comb $T$, such that 
\[
H^1\left(B, \mcN_{T}|_B\left(-\left(N+1\right)\left( \sum b_i \right)\right)\right)=0
\]
and  $\mcN_{T}|_B\left(-\left(N+1\right)\left( \sum b_i \right)\right)$ is globally generated. 
\end{itemize}

Let $\mathcal{B}$ be the space parameterizing the pair consisting of a section $\sigma$ of degree at most $d$ and a collection of $L$ distinct points $(b_1, \ldots, b_L)$ in $B\backslash S$, where fibers $\mcX_{b_i}$  are smooth and separably rationally connected.
 Lemma \ref{lem:preind1} ensures that the fiber of $\mcS$ over any point $(\sigma, (b_1, \ldots, b_L)) \in \mathcal{B}$ is non-empty.
Note that the fiber of $\mcS$ over any point $(\sigma, (b_1, \ldots, b_L)) \in \mathcal{B}$ is geometrically irreducible.

Now we can complete the proof by applying the Lang-Weil estimate \cite{LW54}. 
For a geometrically irreducible quasiprojective variety $U$, we take its closure $\bar{U}$ in some $\PP^n$. 
Denote by $\partial U$ the complement of $U$ in $\bar{U}$. 
In order to apply the Lang-Weil estimate to find a rational point in $U$ for every large enough finite field,
we need to bound the numbers $\deg \bar{U}$, $\deg \partial U$, $\dim U$, $n$.
If we have a family of geometrically irreducible varieties $p: \mathcal{U} \to \mathcal{T}$, with $\mathcal{U}$ and $\mathcal{T}$ quasi-projective,
then we can realize the morphism $p$ as an open immersion into $\overline{\mathcal{U}}$ which is projective over $\mathcal{T}$. 
Thus there is a universal bound of the above numbers for each fiber over $\mathcal{T}$.
We apply this to the family $\mathcal{S}$ over the base $\mathcal{B}$.
Therefore, there exists an integer $m_0'$ such that for any $m\geq m_0'$, 
for any section $\sigma:B\to\mcX^\sm$ and $(b_1, \ldots, b_l)_L\subset B$ defined over $\FF_{q^m}$, 
 there is a $\FF_{q^m}$-rational point of the fiber of $\mcS$ over 
$\left(\sigma,(b_1, \ldots, b_l)_L\right)$.

By \cite[Theorem 2]{KollarSzabo}, there exist integers $a_0>0$ and $d_0> 0$ such that for any $m\geq a_0$,  $b\in B\backslash S$, any $\kappa(b)$-point $x$ in $\mcX_b$, any tangent vector $v$ in the relative tangent direction $\mathcal{T}_x^{\text{rel}}$, there exists an immersed very free rational curve $C\in \mcX_b$ of degree at most $d_0$ passing through $x$ over $\FF_{q^m}$ whose tangent direction at $x$ is $v$.

We take $m_0$ to be the larger of $m_0'$ and $a_0$.
\end{proof}

\subsection{ The induction and the reduction.} 
Section \ref{sec:comb} provides  a construction of combs with bounded degree, and these combs will be used   to construct sections for weak approximation of a prescribed order in this section. 
 We will adapt notations from Section \ref{sec:pre}.

\begin{lem}\label{lem:key}
Let $\pi:\mcX\to B$ be a flat projective family over a finite field $\FF_q$.   Assume that 
{ there is a section $\sigma:B\to\mcX^{sm}$ in the smooth locus of $\pi$}, and the generic fiber is smooth projective and separably rationally connected. 
Let $\mcH$ be an ample divisor on $\mcX$.
Let $S\subset B$ denote the finite set of points, 
the fibers over which are either singular or not separably rationally connected. 
Then for any two positive integers $N$ and $L$, there exist two integers $C_{N, L}, d_{N, L}$, 
both of which only depend on the numbers $N$ and $L$, 
and a bounded family  $\mcM_{N,L}$ parameterizing sections  in the smooth locus of $\pi$ with $\mcH$-degree at most $d_{N,L}$, 
such that the following is true:

 For every $m \geq C_{N,L}$, every collection of closed points
$(b_1,\ldots,b_l)_L\subset B\backslash S$ defined over $\FF_{q^m}$, with the property that $\sum_{i} \deg [\kappa(b_i):\FF_{q^m}]=L$,  
 and every sequence $(\whts_1,\ldots,\whts_l)$ of formal power series sections of $\pi$  over $\SP \whtOO_{B, b_1},\ldots, \SP \whtOO_{B, b_l}$, 
there is {a section $s: B\to\mcX^{sm}$}, parameterized by an $\FF_{q^m}$-point of $\mcM_{N,L}$ such that the section $s$ of $\pi$ is congruent to $\whts_i$ modulo $\mfm_{B,b_i}^{N+1}$ for  $i=1,\ldots, l$.
\end{lem}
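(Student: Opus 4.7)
The plan is to combine the iterated blow-up reformulation from Section~\ref{sec:iterated} with the comb-and-smoothing technique of \cite{KMM92RC, HT06}, finishing with a Lang-Weil count uniform in the $J$-data. Given $J=(N;(b_1,\ldots,b_l)_L;\whts_1,\ldots,\whts_l)$, form the iterated blow-up $\beta(J):\mcX(J)\to\mcX$. Finding a section of $\pi$ that agrees with $\whts_i$ modulo $\mfm_{B,b_i}^{N+1}$ for each $i$ is then equivalent to finding a section of $\mcX(J)\to B$ passing through the prescribed points $r_i\in E_{i,N}$. Only the $N$-jet of each $\whts_i$ at $b_i$ matters, and this jet lives in a finite-dimensional algebraic parameter space; call the resulting bounded jet-data space $\mcJ_{N,L}$.

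First I would apply Lemma~\ref{lem:preind2} to $\sigma$ with the triple $(\deg_\mcH(\sigma),L,N)$ to produce a comb $T=\sigma(B)\cup T_1\cup\cdots\cup T_r$ defined over $\FF_{q^m}$, immersed via $f:T\to\mcX$, with teeth in general fibers disjoint from the $b_i$, satisfying
\[
H^1\bigl(B,\mcN_T|_B(-(N+1)(b_1+\cdots+b_l))\bigr)=0
\]
together with global generation of $\mcN_T|_B(-(N+1)(b_1+\cdots+b_l))$. In particular, the evaluation map from $H^0(B,\mcN_T|_B)$ onto the direct sum of $N$-jets at the $b_i$ is surjective, so within the first-order deformations of $T$ as a comb in $\mcX$ one can freely prescribe the $N$-jet of the handle at each $b_i$. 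Pulling back to $\mcX(J)$, the strict transform of $T$ has a normal sheaf whose restriction to $B$ is an elementary transform of $\mcN_T|_B$ recording the condition of passing through each $r_i$; the above vanishing and global generation translate precisely into global generation and $H^1$-vanishing of this twisted sheaf. The standard comb-smoothing theory of \cite{KMM92RC,HT06} then produces a smoothing of the strict transform of $T$ to a family of sections of $\mcX(J)\to B$ passing through all the $r_i$, cut out as a geometrically irreducible subvariety of the smoothing parameter space.

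Composing with $\beta(J)$ yields a bounded geometrically irreducible family $\mcM_{N,L}^J$ of sections $s:B\to\mcX^{sm}$ congruent to $\whts_i$ modulo $\mfm_{B,b_i}^{N+1}$. Since Lemma~\ref{lem:preind2} bounds both the number $r$ of teeth and the $\mcH$-degree of each tooth purely in terms of $(\deg_\mcH(\sigma),L,N)$, there is a uniform degree bound $d_{N,L}$ and an ambient bounded family $\mcM_{N,L}$ of sections of $\mcH$-degree at most $d_{N,L}$ containing every $\mcM_{N,L}^J$. The entire construction is algebraic in the $J$-data: as $(b_i)$ and the $N$-jets of $\whts_i$ vary over $\mcJ_{N,L}$, the subvarieties $\mcM_{N,L}^J$ assemble into a family of geometrically irreducible varieties of uniformly bounded complexity. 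Applying the Lang-Weil estimate \cite{LW54} to this relative family yields an integer $C_{N,L}$ such that every $\mcM_{N,L}^J$ contains an $\FF_{q^m}$-rational point whenever $m\geq C_{N,L}$ and the $J$-data is $\FF_{q^m}$-rational; this rational point is the desired section.

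The main technical obstacle is the uniformity: packaging the comb construction, its smoothing, and the jet-matching conditions into a single algebraic family over $\mcJ_{N,L}$ whose fibers are geometrically irreducible with complexity bounds depending only on $N$ and $L$. The uniform upper bound $r(d,L,N)$ on the number of teeth from Lemma~\ref{lem:preind2}, combined with the uniform degree bound on very free teeth from \cite{KollarSzabo}, is precisely what enables this uniformity and allows the constants $C_{N,L}$ and $d_{N,L}$ to be chosen independently of the specific $J$-data.
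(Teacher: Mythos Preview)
There is a genuine gap in the geometric core of your argument. Your comb $T$ has handle $\sigma(B)$ and teeth only in general fibers \emph{away} from the $b_i$. Consequently, its strict transform in $\mcX(J)$ meets the fiber over $b_i$ in the component $E_{i,0}$ (the proper transform of $\mcX_{b_i}$), at the point $\sigma(b_i)$; it does not meet $E_{i,N}$ at all, let alone the target point $r_i$. The vanishing of $H^1\bigl(B,\mcN_T|_B(-(N+1)\sum b_i)\bigr)$ together with global generation only says that the differential of the evaluation-to-jets map is surjective at $[T]$, i.e.\ that the map is smooth there. This is a statement about first-order deformations; it lets you hit jets in an open neighborhood of the jet of $\sigma$, not an arbitrary prescribed jet $(\whts_i \bmod \mfm_{b_i}^{N+1})$. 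Likewise, comb-smoothing produces sections that are small deformations of $T$; it cannot move the curve across the fiber to pass through a point $r_i$ lying in a different irreducible component of $\mcX(J)_{b_i}$. Your sentence about the normal sheaf of the strict transform being ``an elementary transform \ldots\ recording the condition of passing through each $r_i$'' is therefore not correct: since the blow-up centers are disjoint from $\sigma(B)$ (generically $\sigma(b_i)\neq \whts_i(b_i)$), the normal bundle is unchanged and records nothing about $r_i$.

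The paper closes this gap by induction on $N$. The base case $N=0$ is Hu's Proposition~3. For the inductive step one takes as handle a section $\sigma_{N-1}$ already matching each $\whts_i$ to order $N-1$; its strict transform $\tilde\sigma_{N-1}$ in $\mcX(J)$ then lands in the last exceptional divisor $E_{i,N}\cong\PP^n$, at some point possibly different from $r_i$. One then attaches, at each $b_i$, a tooth which is a \emph{chain}: a line $L_{i,N}\subset E_{i,N}$ joining $\tilde\sigma_{N-1}(b_i)$ to $r_i$, followed by the $\PP^1$-rulings through $E_{i,N-1},\ldots,E_{i,1}$, ending in a very free curve in $E_{i,0}$. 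Only after these ``vertical'' teeth are in place does one invoke Lemma~\ref{lem:preind2} to attach further very free teeth in general fibers so that the resulting comb $T_J$ has $\mcN_{T_J}(-\sum p_i)$ globally generated with vanishing $H^1$; now $T_J$ \emph{does} pass through the $r_i$, and its smoothing gives the desired sections. Your Lang--Weil packaging over a jet-parameter space is exactly right, but it needs to be applied to this inductively built comb, not to the one you describe.
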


\begin{proof}
Without specification, all degrees discussed in our proof are taken with respect to $\mcH$. We use induction on $N$ and take $m_0(N, L)$ to be the $m_0$ given in Lemma \ref{lem:preind2}. We also adopt notations of iterated blow-ups from Section \ref{sec:iterated}. 

The case where $N=0$ follows from Proposition 3 \cite{HuYongWA}; that is there exists a lower bound $C_{0,L}$,  an upper bound $d_{0,L}$, and a bounded family $\mcM_{0,L}$ and  of sections $\sigma_0: B\to \mcX^\sm$ of degree at most $d_{0,L}$ , such that for every $m\geq C_{0,L}$, weak approximation at order $N=0$ holds at any finite sequence of distinct closed points $(b_1,\ldots,b_l)_L\subset B\backslash S$ defined over $\FF_{q^m}$. 

Assume the statement is true for $n=N-1$.

\begin{figure}[!ht]
 \centering
\vspace*{-0.5in}
\hspace*{.1in}
 \includegraphics[width=6in]{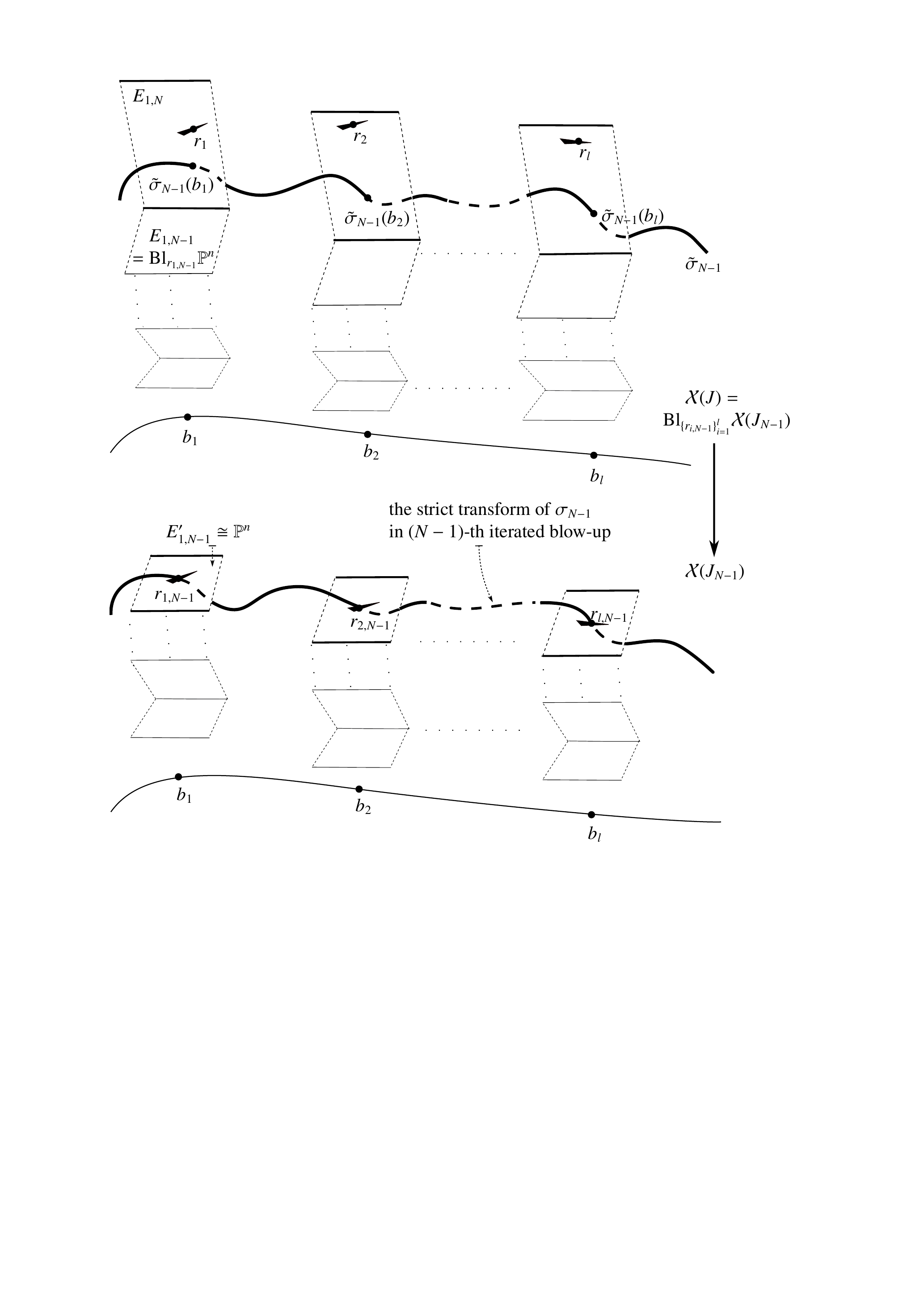}
\vspace*{-3in}
 \caption{From $(N-1)$-th iterated blow-up to the $N$-th iterated blow-up} 
 \label{fig:comb} 
\end{figure}

For $n=N$ and  $m\geq C_{N-1, L}$  , we consider the following $J$-data,
\[
J =\left(N; \left(b_1,\ldots,b_l \right)_L; \whts_1,\ldots,\whts_l\right), 
\]
where  
$\left(b_1,\ldots,b_l \right)_L\subset B\backslash S,  
\sum_{i} \deg [\kappa(b_i):\FF_{q^m}]=L$, and $(\whts_1,\ldots,\whts_l)$ is a formal power series sections of $\pi$ over $\SP \whtOO_{B, b_i}$  over $\FF_{q^m}$. The same as Section \ref{sec:iterated}, we consider the iterated blow-up
\[
\beta(J):\mcX(J)\to\mcX(J_{N-1})\to\ldots\to \mcX
\]
associated with the $J$-data.

By the assumption of  induction, we have  a bounded family $\mcM_{N-1,L}$ of sections {$\sigma_{N-1}:B\to\mcX^\sm$} of  degree  at most  $d_{N-1,L}$, such that
\[
\sigma_{N-1} \equiv \whts_i\mod\mfm_{B,b_i}^{N-1+1} \textrm{ for }i = 1,\ldots, l.
\]
In terms of iterated blow-up showing in Figure \ref{fig:comb}, this is equivalent to the following: in the $(N-1)$-th iterated blow-up $\mcX(J_{N-1})\to\mcX$,  the strict transform of 
${\sigma}_{N-1}$  meets the strict transforms of $\whts_i$ in exceptional divisors $E_{i, N-1}'\cong\PP^n$ , for $i=1,\ldots,l$. We denote these intersection points by $r_{i,N-1}$  for $i=1,\ldots,l$. Then $\mcX(J)$ is obtained from 
  blowing up $\mcX(J_{N-1})$ at  $r_{i,N-1}\in E_{i,N-1}'$ for $i=1,\ldots,l$.

The fiber $\mcX(J)_{b_i}$ decomposes into irreducible components
$$\mcX(J)_{b_i} = E_{i,0 } \cup E_{i,1} \cup \ldots \cup E_{i,N-1}\cup E_{i,N}.$$
Let $r_i\in E_{i,N}\backslash E_{i, N-1}$ be the intersection of the strict transform of $\whts_i$ and $ E_{i,N}$, 
let  $\tilde{\sigma}_{N-1}$ denote the strict transform of $\sigma_{N-1}$ in $\mcX(J)$,
 and let $\wtH\subset\mcX(J)$ denote the pullback of the ample divisor $\mcH$.

Over $\FF_{q^m}, m \geq \max({C_{N-1, L}, m_0})$, 
 we can construct a comb $T_{J}$ with an immersion $f_J:T_{J}\to \mcX(J)$ as follows.  
 Figure \ref{fig:teeth} depicts the construction of a teeth of $T_J$. 
\begin{enumerate} [label= Step \arabic*:]
	\item  Take $B$ as the handle and $f$ restricted to $B$ is the section $\tsigma_{N-1}(B)$.
	\item  For the pair of points  $r_i$ and $\tsigma_{N-1}(b_i)$, connect them with a line $L_{i,N}\subset E_{i,N}\cong\PP^n$, and  $L_{i,N}$ intersects the exceptional divisor $\PP^{n-1}\subset E_{i, N-1}$ at a point $P_{i, N-1}$ .
	\item Since $E_{i, N-1}$ is a $\PP^1$-bundle  over  $\PP^{n-1} = E_{i,N-1}\cap E_{i, N}$, we obtain a $\PP^1$ passing  $P_{i, N-1}$  and intersects the exceptional divisor  $\PP^{n-1}\subset E_{i, N-2}$ at a point $P_{i,N-2}$.
	\item Repeat the previous step till $E_{i,1}$, we obtain a chain of $\PP^1$'s starting from $E_{i,N-1}$ and end with $E_{i,1}$. Let $P_i$ be the intersection between $\PP^1\subset E_{i,1}$ and the exceptional divisor of $E_{i,0}$, the proper transform of $\mcX_{b_i}$.
	\item Attach a very free  curve $C_i$  at $P_i$ in $E_{i,0}$, where $C_i$ meets the exceptional divisor $\PP^{n-1}$  of $ E_{i,1}$ transversely at $P_i$ and   $\deg_{\wtH}(C_i)\leq d_0$. 
	\item Take a chain of $N+1$ $\PP^1$'s as a teeth $T_i$ attached to $B$ at $b_i$. Map the teeth $T_i$ to the union of $L_{i,N}$, $\PP^1$'s, and $C_i$ obtained from the above steps. We have the bound $\deg_{\wtH}(T_i)\leq d_0$.
	  This can be achieved by \cite[Theorem 2]{KollarSzabo}. The number $d_0$ is independent of the points $b_i$.
\end{enumerate}
 Since $\mcN_{{\sigma}_{N-1}}\left(-N\left( \sum b_i \right)\right)=\mcN_{\tilde{\sigma}_{N-1}}$, the calculation of \cite[Lemma 26]{HT06}, combined with Lemma \ref{lem:preind2}, shows that 
as long as $m$ is at least $\max(m_0, C_{N-1, L})$, we can assemble a new comb over $\FF_{q^m}$, still denoted by $T_{J}$ with a little abuse of notations. Let $p_i\in T_J$ be  points mapped to $r_i$.  The new comb $T_J$ is constructed  by attaching $r$ very free immersed (or embedded if the fiber dimension is at least $3$) rational curves with bounded $\wtH$-degree ($\leq d_0$) in fibers outside $b_1, \ldots, b_l$, such that $\mcN_{T_{J}}\left( -\sum p_i \right)$ is globally generated and has no $H^1$. 
The $\wtH$-degree of all $T_J$ is bounded from above by $d_{N, L} := d_{N-1,L}+(r+ L) d_0$. \\
\begin{figure}[!ht]
 \centering
\vspace*{-0.3in}
\hspace*{-.6in}
 \includegraphics[width=7in]{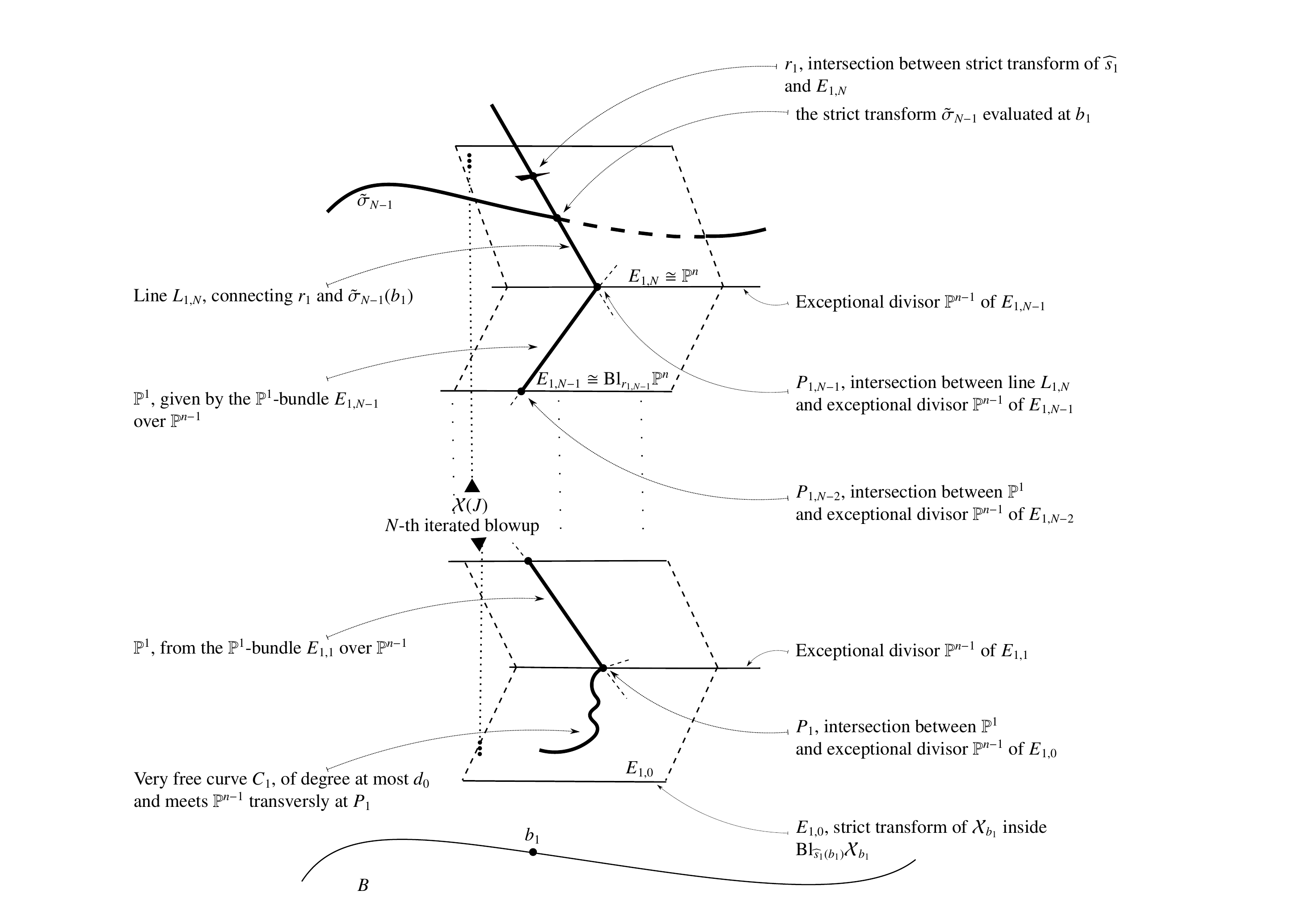}
\vspace*{-.1in}
 \caption{Consider the fiber $\mathcal{X}(J)_{b_1}$ of the $N$-th iterated blow of $\mathcal{X}$ (Figure \ref{fig:comb}), the solid chain of curves -- with $C_1$ at one end,  the line $L_{1,N}$ at the other end --  is a teeth of bounded degree of $T_J$} 
 \label{fig:teeth} 
\end{figure}

Now let us consider a  base  space $\mcB$ parameterizing the following data: 
\[
\left\{
\begin{array}{l}      
	\textrm{a collection of }L\textrm{ distinct points }  (b_1,\ldots,b_L)\in \underbrace{B \times \ldots \times B}_{L};\\
	\textrm{a sequence } (\whts_1/\mfm_{B,b_1}^{N+1},\ldots, \whts_L/\mfm_{B,b_L}^{N+1}) \textrm{ of formal sections $\whts_i$}; \\
	\textrm{iterated blow-up } \mcX(J),~J = (N, (b_1,\ldots,b_L),(\whts_1,\ldots,\whts_L));\\
	\textrm{sequence of points }(r_1,\ldots,r_L)\in ( E_{1,N},\ldots, E_{L,N}),\\
	\textrm{where }r_i\textrm{ is the intersection of strict transform of }\whts_i\textrm{ and } E_{i,N};\\
\end{array}
\right\}, 
\]
then let $\mcM(J)$ be a family over $\mcB$, 
 such that 
$\mcM(J)_{\bb}$ over each point $\bb\in\mcB$ parameterizing the collection of all sections $\sigma:B\to\mcX(J)$ passing $(r_1,\ldots,r_L)$ such that {$\beta(J)\circ\sigma$ is in the smooth locus of $\pi$} and  $\sigma$ has $\wtH$-degree at most $d_{N,L}$ together with an immersion $f$ to $\mcX(J)$. 
Each moduli space $\mcM(J)_{\bb}$ has finitely many components. Consider the closure  $\overline{\mcM}(J)_{\bb}$  of $\mcM(J)_{\bb}$ in the moduli space of stable maps. Let $\overline{M}(J)_\bb$ denote the coarse moduli space of   $\overline{\mcM}(J)_{\bb}$. 

For any $m\geq \max(m_0, C_{N-1,L})$, and any $\FF_{q^m}$-point $\bb\in\mcB$, our construction of comb $T_J$ gives a smooth and non-stacky point of $\overline{\mcM}(J)_{\bb}$. This indicates that there is always a geometric point in $\overline{\mcM}(J)_{\bb}$, and thus there is a geometrically irreducible component in $\overline{\mcM}(J)_{\bb}$. A geometric point of the geometrically irreducible component corresponds to a section $\sigma:B\to\mcX(J)$ passing through $r_1,\ldots,r_L$. Now we are ready to apply Lang-Weil estimate and the strategy is the same as Lemma \ref{lem:preind2}. Mainly, as we varying $\bb\in\mcB$,  $\overline{M}(J)_{\bb}$  forms a family over a finite type scheme. In this family, each fiber has bounded dimension, bounded degree, and bounded $\deg\left(\partial\overline{M}(J)_{\bb}\right)$. All fibers share a universal bound as they vary in a family.  
Then Lang-Weil estimate indicates that there exists $C_{N,L}>0$ such that when $m\geq C_{N,L}$, there is a $\FF_{q^m}$-point in the interior of $\overline{M}(J)_{\bb}$ and hence in the interior of $\overline{\mcM}(J)_{\bb}$. 

Finally, we take the family $\mcM_{N,L} $of sections of $\mcX^\sm$ to be the family of sections whose degree is at most $d_{N, L}$ and agreeing with $\whts_i$ modulo $\mfm_{B,b_i}^{N+1}$ for $i = 1,\ldots,l$.
\end{proof}

In the following we show that we can go back to a smaller field using the special geometry of cubic hypersurfaces. The first is a result to ``lift" formal sections.
\begin{lem}\label{lem:line}
Given a finite sequence $(b_1,\ldots,b_k)$ of distinct closed points of $B$ such that the fiber $\mcX_{b_i}$ is a smooth cubic hypersurface, 
and a sequence $(\whts_1,\ldots,\whts_k)$ of formal power series sections of $\pi$ over $\SP \whtOO_{B, b_i}$, 
there is a sequence $(\whts_{1}{'},\ldots,\whts_{k}{'})$ of formal power series sections of $\pi$ over $\SP \whtOO_{B, b_i}\otimes \FF_{q^2}$ but not over $\SP\whtOO_{B, b_i}$, and a sequence $(\wht{L}_1, \ldots, \wht{L}_k)$ of lines defined over $\SP\whtOO_{B, b_i}$ such that $\wht{L}_i \cap \mcX=\{\whts_i, \whts_i^{'}, \whts_i^{''}\}$, where $\whts_i^{''}$ is the conjugate of $\whts_i^{'}$ under $\text{Gal}(\FF_{q^2}/\FF_q)$.
\end{lem}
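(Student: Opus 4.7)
The assertion is entirely local at each place, so I fix $i$ and abbreviate $b=b_i$, $\whts=\whts_i$, $\whtOO=\whtOO_{B,b}$, $\kappa=\kappa(b)$; let $P=\whts(b)\in\mcX_b(\kappa)$ be the closed point of the given section, and let $\PP^n_\kappa$ denote the ambient projective space of the cubic. My plan is to choose a well-behaved line on the closed fibre through $P$, lift it to a formal line over $\whtOO$ by smoothness of the Grassmannian of lines through $\whts$, and then show that the residual intersection is an étale quadratic algebra that splits precisely over the promised Galois extension.

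\emph{Step 1: Find a line on the closed fibre.} Since $\mcX_b\subset\PP^n_\kappa$ is a smooth cubic containing $P$, projection away from $P$ gives a morphism
\[
\phi_P : \mathrm{Bl}_P\mcX_b \longrightarrow \PP^{n-1}_\kappa
\]
whose fibre over a line $\ell\ni P$ is the residual intersection $(\ell\cap\mcX_b)-P$. Irreducibility of $\mathrm{Bl}_P\mcX_b$ forces this cover to be geometrically connected; in particular $\phi_P$ is finite of degree $2$ with generic fibre a single closed point of degree $2$, and it is étale over some dense open $U\subset\PP^{n-1}_\kappa$. For $\ell\in U(\kappa)$ the fibre is either split (two $\kappa$-points) or a single closed point of degree $2$, i.e.\ a conjugate pair over the quadratic extension of $\kappa$. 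A Lang--Weil / Chebotarev estimate applied to the non-trivial étale double cover $\phi_P^{-1}(U)\to U$ produces a non-split $\kappa$-point $\ell_0$ once $|\kappa|$ is sufficiently large, which is the regime of the applications. Let $L_0\subset\PP^n_\kappa$ be the corresponding line.

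\emph{Step 2: Lift the line.} The scheme parameterising lines in $\PP^n_{\whtOO}$ through the section $\whts$ is identified, via projection away from $\whts$, with $\PP^{n-1}_{\whtOO}$, and hence is smooth over $\whtOO$. Hensel's lemma lifts $L_0$ uniquely to an $\whtOO$-point $\wht L$ of this parameter space, i.e.\ a formal line $\wht L\subset\PP^n_{\whtOO}$ through $\whts$ with closed fibre $L_0$.

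\emph{Step 3: Decompose the residual intersection.} The intersection $\wht L\cap\mcX$ is a closed subscheme of $\wht L\cong\PP^1_{\whtOO}$, flat of relative degree $3$ over $\whtOO$, and contains $\whts$ as a relative degree $1$ subscheme. The residual $R:=(\wht L\cap\mcX)-\whts$ is therefore flat of relative degree $2$. By Step 1 the closed fibre $R|_b$ is reduced with residue field the quadratic extension of $\kappa$, so $R$ is étale over $\whtOO$, and because $\whtOO$ is henselian $R$ is the spectrum of the unramified quadratic $\whtOO$-algebra corresponding to this residue extension. Base change to $\whtOO\otimes_{\FF_q}\FF_{q^2}$ splits this algebra into two factors, decomposing $R$ into sections $\whts'$ and $\whts''$ exchanged by the non-trivial element of $\mathrm{Gal}(\FF_{q^2}/\FF_q)$ and not descending to $\whtOO$; together with $\whts$ they exhaust $\wht L\cap\mcX$, proving the lemma.

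The main obstacle is Step 1: producing a $\kappa$-rational line through $P$ whose residual pair is non-split over $\kappa$. Once this is granted, Steps 2 and 3 are formal consequences of Hensel's lemma and of the flatness and étaleness computations. The existence claim in Step 1 is a purely geometric statement about smooth cubic hypersurfaces over a finite field, reduced via Lang--Weil to the geometric connectedness of $\phi_P$ and to a lower bound on $|\kappa|$ compatible with the residue-field hypotheses of Theorems \ref{thm:good} and \ref{thm:del}.
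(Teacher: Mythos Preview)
Your approach mirrors the paper's exactly: find a suitable line on the closed fibre, then lift via smoothness of the parameter space of lines through $\whts$ and analyse the residual degree-$2$ scheme. Steps 2 and 3 are in fact more carefully spelled out than in the paper, which simply asserts the lift exists.

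The only substantive difference is in Step 1. The paper does not appeal to Lang--Weil here; it quotes Koll\'ar's Lemma 9.4 in \cite{KollarCubic}, which is a direct point count on a smooth cubic hypersurface over $\FF_q$ showing that for $q\geq 11$, through any rational point there passes a line whose residual intersection is a Galois-conjugate pair over $\FF_{q^2}$. Your Lang--Weil/Chebotarev argument is correct in principle and does prove the lemma for $|\kappa|$ sufficiently large, but it is soft: it yields only an ineffective lower bound, not the explicit threshold $q\geq 11$. Since this lemma exists solely to feed into the descent of Lemma~\ref{lem:reduction} and ultimately into Theorem~\ref{thm:good} with that precise bound, you should either invoke Koll\'ar's counting lemma or reproduce the explicit estimate; otherwise the final theorem would hold only for residue fields of some larger, unspecified cardinality.
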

\begin{proof}
Let $x_i$ be the intersection of $\whts_i$ with $\mcX_{b_i}$.

By Lemma 9.4 \cite{KollarCubic}, given any smooth cubic hypersurface $X\subset \PP^n, n\geq 2$ over $\FF_q, q\geq 11$, and any $\FF_q$-point $x_i \in X(\FF_q)$, there is a point $y_i \in X(\FF_{q^2})-X(\FF_q)$ and a line $L_i$ defined over $\FF_q$ such that $L_i \cap X=\{x_i, y_i, y_i^\sigma\}$, where $y_i^\sigma$ is the conjugate point of $y_i$ under $\text{Gal}(\FF_{q^2}/\FF_q)$.

So there are lines $\wht{L}_i$ defined over $\SP \whtOO_{B, b_i}$ such that $\wht{L}_i\times \SP\kappa(b_i)=L_i$ and such that $\whts_i \in \wht{L}_i \cap \mcX$. Take $\whts_i^{'}$ and $\whts_i^{''}$ to be the intersection of $\wht{L}_i$ with $\mcX$.
\end{proof}

Now we prove the key reduction lemma.
\begin{lem}\label{lem:reduction}
Let $X$ be a smooth cubic hypersurface defined over $\FF_{q^k}(B)$. If weak approximation at order $N$ holds for  places of good reduction over $\FF_{q^{2k}}(B)$, then weak approximation at order $N$ holds for  places of good reduction over $\FF_{q^k}(B)$.
\end{lem}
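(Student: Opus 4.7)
The strategy is a Galois descent argument that exploits the tri-secant geometry of lines on a cubic hypersurface: every rational point sits on a line meeting the cubic in two further points, which we arrange to be Galois-conjugate. Given a $J$-data $(N;(b_1,\ldots,b_l)_L;(\whts_1,\ldots,\whts_l))$ over $\FF_{q^k}(B)$ at places of good reduction, I would first invoke Lemma \ref{lem:line} (applied with the residue field of each $b_i$ in place of $\FF_q$) to produce, for each $i$, a line $\wht{L}_i$ defined over $\SP\whtOO_{B,b_i}$ together with two formal sections $\whts_i',\whts_i''$, conjugate under $\mathrm{Gal}(\FF_{q^{2k}}/\FF_{q^k})$, satisfying $\wht{L}_i\cap\mcX=\{\whts_i,\whts_i',\whts_i''\}$. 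After base change to $\FF_{q^{2k}}$ each $b_i$ yields one or two closed points of $B_{\FF_{q^{2k}}}$, still of good reduction, and $\{\whts_i',\whts_i''\}$ assembles into a $J'$-data over $\FF_{q^{2k}}(B)$. Applying the hypothesis, there is a section $\sigma':B_{\FF_{q^{2k}}}\to\mcX_{\FF_{q^{2k}}}$ with $\sigma'\equiv\whts_i' \bmod \mfm_{B,b_i}^{N+1}$ at each corresponding place; its Galois conjugate $\sigma''$ then automatically satisfies $\sigma''\equiv\whts_i'' \bmod \mfm_{B,b_i}^{N+1}$.

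For the descent step, at any $b\in B$ with $\sigma'(b)\neq\sigma''(b)$ the unordered pair $\{\sigma'(b),\sigma''(b)\}$ is Galois-stable, so the line $\ell(b)$ joining them is defined over $\FF_{q^k}$; its third point of intersection with $\mcX_b$ is Galois-fixed and hence $\FF_{q^k}$-rational, giving a rational map $\sigma:B\dashrightarrow\mcX$ over $\FF_{q^k}$. Since Lemma \ref{lem:line} (via Kollár's Lemma 9.4) guarantees that $\whts_i'(b_i)\neq\whts_i''(b_i)$, the sections $\sigma',\sigma''$ remain distinct on a neighborhood of each $b_i$, so $\sigma$ is regular there. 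Properness of $\mcX\to B$, together with smoothness of $B$, extends $\sigma$ across the finite indeterminacy locus to a regular section of $\pi$.

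It remains to verify that $\sigma\equiv\whts_i \bmod \mfm_{B,b_i}^{N+1}$ for each $i$. Working on $\SP\whtOO_{B,b_i}$, the line $\ell$ through $\sigma',\sigma''$ is an order-$N$ perturbation of $\wht{L}_i$ (since its two defining points are), so the restriction of the cubic equation of $\mcX$ to $\ell\cong\PP^1_{\whtOO_{B,b_i}}$ is a cubic in a line-parameter whose coefficients perturb those of the cubic on $\wht{L}_i$ by elements of $\mfm_{B,b_i}^{N+1}$. Because the three roots $\whts_i(b_i),\whts_i'(b_i),\whts_i''(b_i)$ are pairwise distinct over the residue field, a formal Hensel-type argument shows that the three roots of the perturbed cubic deform to order $N$; the third root is precisely $\sigma$, giving the required approximation. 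The principal obstacle is exactly this Hensel-style estimate: one must pin down that an order-$N$ perturbation of the coefficients of a cubic in $\whtOO_{B,b_i}[s]$ moves each simple root by $\mfm_{B,b_i}^{N+1}$. The distinctness furnished by Lemma \ref{lem:line} is what makes this go through, and is the reason the residue-field hypothesis propagates correctly through the reduction.
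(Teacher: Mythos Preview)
Your proposal is correct and follows essentially the same line as the paper's own proof: invoke Lemma~\ref{lem:line} to replace each $\whts_i$ by a Galois-conjugate pair $\whts_i',\whts_i''$ lying on an $\FF_{q^k}$-rational secant line $\wht{L}_i$, approximate $\whts_i'$ by a global section $s'$ over $\FF_{q^{2k}}$, take its Galois conjugate $s''$, span the (Galois-stable, hence $\FF_{q^k}$-rational) family of lines $\mcL$ through $s',s''$, and read off the third intersection $s=\mcL\cap\mcX\setminus\{s',s''\}$ as the desired $\FF_{q^k}$-section. The paper simply asserts that $s$ ``meets the approximation requirement by construction''; your Hensel/Vieta justification of this step and your explicit handling of the finite locus where $s'$ and $s''$ might coincide (extending by properness) are welcome elaborations of points the paper leaves implicit, but they do not constitute a different argument.
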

\begin{proof}
By Lemma \ref{lem:line}, given a finite sequence $(b_1,\ldots,b_l)$ of distinct closed points of $B$ such that the fiber $\mcX_{b_i}$ is smooth, 
and a sequence $(\whts_1,\ldots,\whts_l)$ of formal power series sections of $\pi$ over $\SP \whtOO_{B, b_i}$, 
there is a sequence $(\whts_1',\ldots,\whts_l')$ of formal power series sections of $\pi$ over $\SP\whtOO_{B, b_i}\otimes \FF_{q^{2k}}$ 
but not over $\SP\whtOO_{B, b_i}$, 
and a sequence $(\wht{L}_1, \ldots, \wht{L}_l)$ of lines defined over $\SP\whtOO_{B, b_i}$ such that $\wht{L}_i \cap \mcX=\{ \whts_i, \whts_i', \whts_i''\}$, where $\whts_i''$ is the conjugate of $\whts_i'$ under $\text{Gal}(\FF_{q^{2k}}/\FF_{q^k})$.

By assumption, there is a section $s'$ defined over $\FF_{q^{2k}}$ such that 
\[
s' \cong \whts_i' \otimes \FF_{q^{2k}}\mod \mfm_{B, b_i}^{N+1}.
\]
By the choice of $\whts_i^{'}$, $s'$ is not defined over $\FF_{q^2}$. Let $s''$ be the conjugate section of $s'$. Then it satisfies
\[
s'' \cong \whts_i'' \otimes \FF_{q^{2k}}\mod \mfm_{B, b_i}^{N+1}.
\]

Take the the family of line $\mathcal{L}$ spanned by $s'$ and $s''$. Note that the family $\mathcal{L}$ is necessarily defined over $\FF_{q^k}$. Let $s$ be the third section in the intersection of $\mathcal{L}$ and $\mcX$. Then this section $s$ meets the approximation requirement by construction.
\end{proof}

We now finish the proof of Theorem \ref{thm:good}. 
For any cubic hypersurface defined over $\FF_q(B)$, there is a model $\pi: \mcX \to B$ over $B$ such that for each closed point $b \in B$, if the corresponding place is a place of good reduction, then the fiber of $\mcX$ over $b$ is a smooth cubic hypersurface.  
By using  general N\'{e}ron desingularization  \cite[Corollary 2.4]{Pop86}, 
we may assume that { for the model $\pi:\mcX\to B$, there is a section $B\to\mcX^\sm$ in its smooth locus}. 
Note that any smooth cubic hypersurface is separably rationally connected. 
Thus the set $S \subset B$ in Lemma \ref{lem:key} is just the set of points over which the fibers are singular. 
Given the collection of closed points $b_1, \ldots, b_l$ and the order $N\geq 0$ we want to approximate, 
there is a number $M$ such that weak approximation at order $N$ holds over these points over $\FF_{q^{2^m}}$ for any $m\geq M$ by the Lemma \ref{lem:key}. Hence, 
 weak approximation at order $N$ for $q\geq 11$ at these points of good reduction follows from Lemma \ref{lem:reduction}.


\section{Proof of Theorem \ref{thm:del}}

In this section, we will discuss  weak approximation for del Pezzo surfaces of degree $4$ at places of good reduction whose residue fields have at least $13$ elements (assuming the existence of rational points) by applying similar ideas from the case of cubic hypersurfaces (cf. Section \ref{sec:cubic}). 

By Lemma \ref{lem:key}, the set of rational points of a del Pezzo surface of degree at least $4$ (in fact any smooth projective separably rationally connected variety) defined over a global function field is either Zariski dense or empty. 
The classical geometry of degree $4$ del Pezzo surfaces naturally suggests that one may blow up a point not contained in the $16$ lines and get a smooth cubic surface with a line, and thus one may reduce the weak approximation problem to the case of cubic surfaces.
However, in our case we cannot use this strategy, at least not in this simple-minded way. 
The reason is the following: for a place $\nu$ of good reduction, we do not know if there is rational point over the function field so that after the blow-up this is a place of good reduction for the cubic surface under our definition (cf. Section \ref{sec:intro}).
More precisely, it is possible that the central fiber of the family only becomes a weak Fano surface and the linear system of the anti-canonical bundle contracts some $(-2)$-curves.
This could happen if all the $\FF_q$-rational points of the fiber are contained in one of the $16$ lines.

Hence, instead of using the reduction to cubic surfaces as the above, we will prove the following lemma, and then weak approximation for del Pezzo surfaces of degree $4$ at places  of good reductions follows by the same argument as in Lemma  \ref{lem:key}, \ref{lem:line} and \ref{lem:reduction} in Section \ref{sec:cubic}.

Throughout the rest of this section, let $X$ be a smooth del Pezzo surface of degree $4$ defined over a finite field $\FF_q$, and the embedding $X \hookrightarrow \PP^4$ is given by the anti-canonical bundle of $X$.
\begin{prop}\label{lem:plane}
 If $q \geq 13$, then given any smooth $\FF_q$-rational point $x$ of $X$, there exists a plane $\Pi$ defined over $\FF_q$ such that the intersection of $\Pi$ with $X$ consists of $x$ and three other points, each of which is defined over $\FF_{q^3}$.
\end{prop}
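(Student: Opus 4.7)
The plan is to produce such a plane $\Pi$ by a first/second moment count on the $4$-dimensional Grassmannian $S := \operatorname{Gr}(2, \PP^4 / \langle x \rangle)$ of $2$-planes through $x$, in the spirit of Kollár's Lemma~9.4 of \cite{KollarCubic} used in Lemma~\ref{lem:line} but one dimension higher. For a plane $\Pi$ not containing any of the (at most $6$) lines of $X$ through $x$, the scheme $\Pi \cap X$ has length $4$ by Bézout applied to the two conics $\Pi \cap Q_1, \Pi \cap Q_2$ cutting $X$ out of $\Pi$, so the residual $Z_\Pi := (\Pi \cap X) \setminus \{x\}$ has length $3$. The key observation is: if $Z_\Pi$ is reduced and $Z_\Pi(\FF_q) = \emptyset$, then its three geometric points form a single Galois orbit of size $3$ (no Galois-stable partition of a $3$-set without a fixed point fails to be cyclic), so each is defined over $\FF_{q^3}$, as required. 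Hence it suffices to show that the number of \emph{good} planes $\Pi \in S(\FF_q)$ with $Z_\Pi$ reduced and $Z_\Pi(\FF_q) = \emptyset$ is strictly positive.

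Set $M := |X(\FF_q)|$; the Weil conjectures applied to the smooth rational surface $X$ (with geometric Picard rank $6$) give $M = q^2 + tq + 1$ with $|t| \leq 6$, and $|S(\FF_q)| = (q^2+1)(q^2+q+1)$. Through each line in $\PP^4$ there are exactly $q^2+q+1$ planes, and the only way three distinct points $x, y_1, y_2 \in X(\FF_q)$ can be collinear is to lie on a line of $X$ through $x$ (since $\ell \cap X$ has length at most $2$ for a line $\ell \not\subset X$). Summing over good planes and using that there are at most $6$ lines on $X$ through $x$ yields the moment identities
\[
\sum_{\Pi \text{ good}} |Z_\Pi(\FF_q)| \;=\; (M-1)(q^2+q+1) + O(q^3),
\]
\[
\sum_{\Pi \text{ good}} |Z_\Pi(\FF_q)|^2 \;=\; (M-1)(q^2+q+1) + (M-1)(M-2) + O(q^3).
\]
For reduced $Z_\Pi$, the value $|Z_\Pi(\FF_q)|$ lies in $\{0,1,3\}$, since the Galois-stable partitions of a $3$-set into parts of size $\leq 3$ are $1{+}1{+}1$, $1{+}2$, and $3$, with $3$, $1$, $0$ fixed points respectively. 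Letting $a, b, c$ denote the number of good planes with reduced residual and $|Z_\Pi(\FF_q)| = 3, 1, 0$, subtraction of the two identities gives $6a = (M-1)(M-2) + O(q^3)$, whence
\[
c \;\geq\; |S(\FF_q)| - a - b - O(q^3) \;\geq\; \frac{q^4 - 6 q^3}{3} + O(q^3),
\]
and careful tracking of the implicit constants yields strict positivity for $q \geq 13$.

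The main obstacle is exactly this bookkeeping: while the dominant term $q^4/3$ easily overwhelms the subtraction $\frac{1}{3}tq^3 \leq 2q^3$ for $q$ large, extracting the sharp threshold $q \geq 13$ requires explicit bounds on three error sources. These are (i) the at most $6$ lines of $X$ through $x$, each generating a $\PP^2$-family of $q^2+q+1$ bad planes and removing up to $k_x q$ usable points $y$; (ii) the non-reduced locus in $S$, a divisor (cut out by the tangency condition between the pencil conics) whose $\FF_q$-points contribute $O(q^3)$ to the moments but only bounded values $|Z_\Pi(\FF_q)|^2 \leq 9$; and (iii) pairs $(y_1, y_2)$ whose spanned plane $\overline{x, y_1, y_2}$ secretly contains a further line of $X$ through $x$, controlled via the incidence geometry of the projection $\pi_x\colon X \dashrightarrow \PP^3$ from $x$. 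Each contribution is a polynomial in $q$ of degree at most $3$ with effectively computable constants, and one must verify that their sum is strictly less than $(q^4 - 6q^3)/3$ at $q = 13$ — a careful but routine check paralleling Kollár's explicit threshold $q \geq 11$ in \cite{KollarCubic}.
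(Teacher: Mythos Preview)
Your moment-count approach is sound in principle and genuinely different from the paper's argument, but it is considerably more laborious and you do not actually carry out the step that matters most---the verification that the explicit error terms are beaten at $q=13$. The paper instead reduces dimension twice before counting. First it slices by a hyperplane $H$ through $x$ (Lemma~\ref{lem:hyperplane}) chosen so that $H\cap X$ is a geometrically integral genus~$1$ curve smooth at $x$; this requires only the crude bound $q\geq 5$ against at most $16+5+1$ pencils of bad hyperplanes. Then it projects the quartic curve $H\cap X\subset\PP^3$ from $x$ to obtain a plane cubic $E\subset\PP^2$, and reduces to finding a single line $L\subset\PP^2$ meeting $E$ in a Galois $3$-cycle (Lemma~\ref{lem:lineInPlane}); the desired plane is $\Pi=\overline{x,L}$. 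The line count in $\PP^2$ involves only $q^2+q+1$ lines and the Hasse--Weil bound on $|E(\FF_q)|$, so the arithmetic fits on a few lines and the threshold $q\geq 13$ drops out immediately.

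By contrast, your method works directly on the $4$-dimensional Grassmannian of planes through $x$, and the $O(q^3)$ error terms you list---tangent planes at $x$, planes containing one of the $16$ lines, planes meeting a conic of $X$, and the non-reduced locus---each carry constants that must be pinned down to get anything sharp. You also need to exclude planes $\Pi\subset T_xX$ (so that $x$ has multiplicity exactly $1$ in $\Pi\cap X$), a condition you do not mention. None of this is wrong, but ``a careful but routine check paralleling Koll\'ar'' is precisely the content of the proposition; without it you have only shown the result for $q\gg 0$. The paper's two-step reduction trades your single $4$-dimensional count for two much smaller ones, and that is what makes the constant $13$ accessible.
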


We need the following lemmas.
\begin{lem}\label{lem:hyperplane}
If $q \geq 5$, then given any smooth $\FF_q$-rational point $x$ of $X$, there exists a hyperplane $H\subset\PP^4$ containing $x$ defined over $\FF_q$ such that the intersection of $H$ with $X$ is a geometrically integral genus $1$ curve which is smooth at $x$.
\end{lem}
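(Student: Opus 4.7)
The plan is to identify hyperplanes of $\PP^4$ through $x$ with the codimension-one sub-linear system $\PP^3 \subset |-K_X| = \PP^4$ of anticanonical divisors passing through $x$, and to exhibit an $\FF_q$-point of this $\PP^3$ lying outside the ``bad'' closed subvariety $B$ consisting of those $H$ for which $H \cap X$ is either singular at $x$ or not geometrically integral. Any $H$ in the complement of $B$ will then produce a hyperplane with the desired properties, since an anticanonical hyperplane section is already a curve of arithmetic genus $1$.

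The bad locus splits into two pieces. The singular-at-$x$ piece is $\{H : H \supset T_xX\}$, a $\PP^1 \subset \PP^3$ contributing $q+1$ points over $\FF_q$. For the non-integral piece: since $-K_X$ is not $2$-divisible in $\Pic(X)$ (as one checks using the description $-K_X = 3L - E_1 - \ldots - E_5$ in the blow-up of $\PP^2$ at five points), no non-reduced hyperplane sections arise, and so reducibility forces a decomposition $H \cap X = D_1+D_2$ with $D_i\cdot(-K_X) \in \{1,2,3\}$. By symmetry one may assume $D_1\cdot(-K_X) \in \{1,2\}$, and the effective classes of these two degrees are exactly the $16$ lines on $X$ together with the $10 = 2\times 5$ pencils of conics, the latter paired as $C\leftrightarrow -K_X-C$ into $5$ symmetric pairs. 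Each line $\ell$ contributes a $\PP^2 \subset |-K_X|$ of hyperplanes containing it, cutting $\PP^3$ in a $\PP^1$ for generic $x\notin\ell$; each conic-pair contributes a Segre quadric surface in $|-K_X|$ — the image of $|D|\times|-K_X-D|\to |-K_X|$ under the multiplication map on sections — which cuts $\PP^3$ in two rulings, each a $\PP^1$.

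Putting this together, for $x$ in general position (not on any of the $16$ lines), the bad locus $B$ is a union of $1+16+10 = 27$ explicit lines in $\PP^3$, so $|B(\FF_q)|\leq 27(q+1)$, while $|\PP^3(\FF_q)| = q^3+q^2+q+1$. Thus the complement of $B$ contains an $\FF_q$-point once $q$ is above an explicit threshold, which is the content of the lemma.

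The main obstacle will be matching the announced threshold $q\geq 5$. The naive union bound only yields $q^3+q^2-26q-26 > 0$, first positive at $q=6$. Closing the last gap will require either inclusion-exclusion among the $27$ bad lines — many of which are concurrent, e.g.\ at hyperplanes containing both a line of $X$ and the tangent plane $T_xX$ — or an exploitation of the Galois orbit structure on the $16$ lines (most are not individually $\FF_q$-rational on a generic del Pezzo over $\FF_q$, so the effective number of bad $\PP^1$'s defined over $\FF_q$ is substantially less than $16$), together with a separate treatment of the finitely many special $x$ lying on one or more of the $16$ lines, where the bad locus from lines contains $\PP^2$'s rather than $\PP^1$'s and must be controlled by hand.
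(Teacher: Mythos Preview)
Your approach mirrors the paper's exactly: parametrize hyperplanes through $x$ by a $\PP^3\subset|-K_X|$, split the bad locus into (i) tangent hyperplanes, (ii) hyperplanes containing one of the $16$ lines, (iii) hyperplanes containing a conic, and compare a union bound against $|\PP^3(\FF_q)|=(q^2+1)(q+1)$. Your Segre-quadric description of the conic contribution is equivalent to the paper's ``residual conic'' observation: if $H$ contains a conic not through $x$, the residual conic or chain of two lines must pass through $x$, and the latter case is already absorbed into the line count.

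The only difference, and the sole source of your threshold trouble, is the conic count. The paper asserts there are ``at most $5$ conics containing $x$,'' giving
\[
16(q+1)+5(q+1)+(q+1)=22(q+1)
\]
bad hyperplanes, whence $q^2+1>22$ and $q\geq 5$ suffices. Your figure of $10$ reflects the $10$ conic classes on a degree-$4$ del Pezzo (the five residual pairs $L-E_i$ and $2L-\textstyle\sum_{j\neq i}E_j$), one member of each through a general $x$; this yields $27(q+1)$ and only $q\geq 6$. The paper does \emph{not} invoke inclusion--exclusion, Galois-orbit structure on the lines, or a separate treatment of $x$ lying on a line to reach $q\geq 5$; it simply uses the number $5$ where you use $10$. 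So none of the refinements sketched in your final paragraph are what the paper's argument relies on---the entire discrepancy between your bound and the paper's lives in that single integer.
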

\begin{proof}
The intersection $H \cap X$ fails to be geometrically integral if and only if $H$ contains a line or a conic. 
In the case where $H$ contains a conic not passing through $x$, $H$ will contain a ``residual" conic (or a chain of two lines) which contains the point $x$.  

The geometry of del Pezzo surfaces of degree $4$ provides us with the following information. 
\begin{itemize}
	\item  Since there are $16$ lines in $X$, we have at most $16(q+1)$ hyperplanes containing a line.
	\item Since there are at most $5$ conics containing $x$, and any hyperplane containing a conic also contains the plane spanned by the conic, we have at most $5(q+1)$ hyperplanes containing a conic.
	\item There are $q+1$ tangent hyperplanes at $x$.
\end{itemize}
There are $q^3+q^2+q+1=(q^2+1)(q+1)$ lines containing $x$. As long as $q \geq 5$, we can find a hyperplane over $\FF_q$ with desired properties.
\end{proof}

\begin{lem}\label{lem:lineInPlane}
Let $E \subset \PP^2$ be a geometrically integral degree $3$ curve over a finite field $\FF_q$ . If $q \geq 13$, then given any smooth $\FF_q$-rational point $x$ of $X$, there exists a line $L$ defined over $\FF_q$ such that the intersection of $L$ with $X$ consists three Galois conjugate points, all of which are defined over $\FF_{q^3}$.
\end{lem}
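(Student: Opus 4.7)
The plan is to classify the $\FF_q$-rational lines in $\PP^2$ by the orbit structure of the intersection divisor $L \cap E$ over $\ol{\FF_q}$. Since $\deg E = 3$, this $\FF_q$-rational degree-$3$ divisor has support splitting as one of: (a) three (not necessarily distinct) $\FF_q$-points of $E$, (b) one $\FF_q$-point together with a Galois-conjugate pair over $\FF_{q^2}$, or (c) a single Galois orbit of three distinct points in $E(\FF_{q^3})\setminus E(\FF_q)$. The lines we seek are precisely those of type (c), which are exactly the $\FF_q$-lines $L$ with $L \cap E(\FF_q) = \emptyset$.

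Write $N := |E(\FF_q)|$ and let $a_i$ denote the number of $\FF_q$-lines $L$ with exactly $i$ distinct $\FF_q$-points of $E$ in $L \cap E$, so that $a_0 + a_1 + a_2 + a_3 = q^2+q+1$ and we want $a_0 \geq 1$. I will use two incidence identities. Each $\FF_q$-point of $E$ lies on $q+1$ $\FF_q$-lines, giving
\[ a_1 + 2 a_2 + 3 a_3 = (q+1) N, \]
and each unordered pair of distinct $\FF_q$-points of $E$ spans a unique $\FF_q$-line, giving $a_2 + 3 a_3 = \binom{N}{2}$. Eliminating $a_1$ and using $a_2, a_3 \geq 0$ yields the upper bound $a_1 + a_2 + a_3 \leq N(3q+4-N)/3$. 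Hence a sufficient condition for $a_0 \geq 1$ is the quadratic inequality
\[ N^2 - (3q+4)\, N + 3q(q+1) > 0. \]

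To close the argument, I bound $N$ case by case. If $E$ is geometrically integral but singular, its unique singular point is forced to be $\FF_q$-rational and the normalization is $\PP^1$, so a direct count gives $N \leq q + 2$, making the quadratic easy to verify. If $E$ is smooth, then $E$ is a curve of genus $1$ and the Hasse--Weil bound gives $N \leq q + 1 + 2\sqrt{q}$; substituting this into the quadratic reduces the claim to an elementary numerical inequality in $q$, which can be checked directly to hold for every $q \geq 13$.

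The main obstacle is that the naive incidence bound ``number of bad lines $\leq (q+1) N$'' is far too weak for the smooth case: at the Hasse--Weil extremum $N \approx q + 2\sqrt{q}$ it already exceeds the total number $q^2+q+1$ of $\FF_q$-lines. The essential refinement is the second-moment identity $a_2 + 3 a_3 = \binom{N}{2}$, which extracts the savings from bad lines hitting $E$ in two or three rational points; once these savings are built in, the discriminant of the resulting quadratic in $N$ becomes negative for $q$ sufficiently large, and an explicit substitution produces the threshold $q \geq 13$.
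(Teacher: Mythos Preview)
Your argument is correct and follows the same overall strategy as the paper: both proofs show that the number of ``bad'' $\FF_q$-lines (those meeting $E(\FF_q)$) is strictly less than $q^2+q+1$. The organizational difference is that the paper sorts bad lines into three geometric types---tangent lines, lines through a conjugate $\FF_{q^2}$-pair, and trisecants---and therefore brings in $m=|E(\FF_{q^2})|$ together with the Hasse--Weil bound over $\FF_{q^2}$; you instead use the two incidence identities $a_1+2a_2+3a_3=(q+1)N$ and $a_2+3a_3=\binom{N}{2}$ to bound $a_1+a_2+a_3$ purely in terms of $N=|E(\FF_q)|$. In fact the two bounds coincide exactly in the smooth case, since $m=(q+1)^2-t^2$ with $N=q+1-t$ forces $(n+m)/2=N(q+1)-\binom{N}{2}$; so your incidence computation is just a repackaging that hides the $\FF_{q^2}$-count inside the identity $a_1+a_2=N(q+1)-\binom{N}{2}$. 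Your formulation is a bit cleaner and even yields a smaller threshold than $13$ if one pushes the numerics (the discriminant of your quadratic in $N$ is $-3q^2+12q+16$, already negative for $q\ge 7$), but for the stated bound the two arguments are equivalent. One small remark: the hypothesis ``given any smooth $\FF_q$-rational point $x$'' in the statement plays no role in either proof.
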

\begin{proof}
We first consider the case $E$ is smooth.

Denote by $n$ (resp. $m$) the number of $\FF_q$ (resp. $\FF_{q^2}$) points of $E$. There are three types of lines we want to avoid.
\begin{itemize}
	\item There are $n$ tangent lines to $E$.
	\item There are  at most $\frac{m-n}{2}$ lines which intersect $E$ at one $\FF_q$ point and two $\FF_{q^2}$ (but not $\FF_q$) points.
	\item There are at most $\frac{1}{3}{n\choose 2}$ lines which intersect $E$ at three $\FF_q$ points. 
\end{itemize}

Thus there are at most $\frac{n(n+2)}{6}+\frac{m}{2}$ lines to avoid. 
By the Hasse-Weil estimate, $n\leq 1+q+2\sqrt{q}, m \leq 1+q^2+2q$.
So when $q\geq 13$, there is at least one line which is not of the three types listed above.

If $E$ is singular, then it is either a nodal or a cupidal cubic. In either case there is a unique singular point.
Assume the number of $\FF_q$ (resp. $\FF_{q^2}$) points of $E$ is $1+n$ (resp. $1+m$).
Then $n=q-1, q$ or $q+1$, $m=q^2$ or $q^2-1$.

There are $q+1$ lines containing the unique singular point,
$n$ lines tangent to $E$ at an $\FF_q$ point,
$\frac{m-n}{2}$ lines which intersect $E$ at one $\FF_q$ point and two $\FF_{q^2}$ (but not $\FF_q$) points,
at most $\frac{1}{3}{n\choose 2}$ lines which intersect $E$ at three $\FF_q$ points.
By a simple computation, for any $q$ we can find the desired line $L$. 
\end{proof}

Once these are proved, Proposition \ref{lem:plane} follows easily.
\begin{proof}[Proof of Proposition \ref{lem:plane}]
We use the hyperplane $H\subset\PP^4$ given by Lemma \ref{lem:hyperplane}, and take the intersection $H \cap X$  to get a genus one curve in $H \cong \PP^3$.
Projecting the curve from $x$ yields a degree $3$ curve in $\PP^2\subset H\subset\PP^4$.
Take the plane in $H$ spanned of $x$ and the line given by Lemma \ref{lem:lineInPlane}. This is the plane we want.
\end{proof}


\section{Proof of Theorem \ref{thm:function}}
The general idea of the proof is first to approximate the $\nu$-adic points by a rational point $p$ in a degree $2$ field extension $K'$ of $K$, 
then produce a rational curve in the original field $K$ containing all the Galois conjugate points of $p$. 
Since weak approximation holds for $\PP^1_K$, we prove weak approximation over the original field $K$.
This strategy depends closely on the existence of rational curves connecting two general rational points over a global field, which we discuss below.

\begin{lem}\label{lem:singular}
Let $K$ be a purely imaginary number field or the function field of a smooth irreducible curve $B$  defined over a finite field. 
Let $V$ be a singular cubic hypersurface of dimension at least $4$ with an ordinary double point $P$ which is a $K$-rational point.
Then given any other $K$-rational point $R$, there is a $K$-morphism $f: \PP^1_K \to V$ such that $f(0)=P, f(\infty)=R$. 
\end{lem}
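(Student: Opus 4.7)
The plan is to use projection from the node. Choose homogeneous coordinates on $\PP^n$ (where $n = \dim V + 1 \ge 5$) so that $P = [1:0:\cdots:0]$. Since $P$ is an ordinary double point, $V$ is cut out by a cubic of the form
\[
F = x_0\, q(x_1,\ldots,x_n) + c(x_1,\ldots,x_n),
\]
where $q$ is a nondegenerate quadratic form and $c$ a cubic form. Projection from $P$ gives a birational map $\pi : V \dashrightarrow \PP^{n-1}$ whose inverse is the rational map
\[
\pi^{-1}: [x_1:\cdots:x_n] \longmapsto [-c(x) : q(x)x_1 : \cdots : q(x)x_n],
\]
defined away from the Fano variety $F_P = \{q=0\}\cap\{c=0\}$ of lines through $P$ on $V$. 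The crucial geometric feature is that $\pi^{-1}$ sends every point of the tangent cone quadric $Q = \{q=0\}$ outside $F_P$ back to the node $P$.

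The strategy is to realize $f$ as the restriction of $\pi^{-1}$ to a well-chosen $K$-rational line. Set $R' := \pi(R)$; since $R \ne P$ lies on $V$, the line $\overline{PR}$ meets $V$ at $R$ with multiplicity one, which forces $q(R_1,\ldots,R_n)\ne 0$, so $R' \notin Q$. I will produce a $K$-rational point $b \in Q(K)\setminus F_P(K)$ such that the line $\ell := \overline{R'b}\subset \PP^{n-1}$ is disjoint from $F_P$. Then the restriction $\pi^{-1}|_\ell$ has no indeterminacy and extends to a morphism $\PP^1_K \to V$ which, by the formulas above, sends $R'$ to $R$ (using the identity $R_0 q(R') = -c(R')$ coming from $R \in V$) and sends $b$ to $P$ (using $q(b)=0$ and $c(b)\ne 0$). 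An affine reparametrization then yields $f$ with $f(0)=P$ and $f(\infty)=R$.

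The main obstacle is the arithmetic step of producing such a $b$. The quadric $Q$ is smooth of dimension $n-2 \ge 3$, defined by a quadratic form in $n \ge 5$ variables. When $K = \FF_q(B)$, Lang's theorem that $\FF_q(B)$ is $C_2$ yields $Q(K)\ne\emptyset$. When $K$ is a purely imaginary number field, the Hasse-Minkowski theorem applies, since a quadratic form in $\ge 5$ variables represents zero over every non-archimedean completion and trivially over $\CC$, while $K$ has no real places. Once $Q(K)\ne \emptyset$, stereographic projection from a $K$-point shows $Q$ is $K$-rational, so $Q(K)$ is Zariski dense in $Q$. A dimension count then bounds the bad locus $\{b\in Q : \overline{R'b}\cap F_P \ne \emptyset\}$: the incidence variety $\{(b,p)\in Q\times F_P : p\in\overline{R'b}\}$ projects to $F_P$ with finite fibers (because $R'\notin Q$ prevents the line $\overline{R'p}$ from lying in $Q$), hence has dimension $\dim F_P = n-3$, so its projection to $Q$ has dimension at most $n-3 < \dim Q$. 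Irreducibility of $V$ rules out the degenerate case $c|_Q \equiv 0$, so $F_P$ is indeed a proper subvariety. Zariski density of $Q(K)$ then supplies a good $b$, completing the construction. The hypothesis $\dim V \ge 4$ enters here precisely to ensure $n\ge 5$, which is the threshold at both the $C_2$ step and the local solvability step.
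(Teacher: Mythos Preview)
Your approach is essentially identical to the paper's: set $P=[1:0:\cdots:0]$, write $F=x_0q+c$, use the birational inverse $\phi:[x_1:\cdots:x_n]\mapsto[-c(x):q(x)x_1:\cdots:q(x)x_n]$, find a $K$-point $b$ on the smooth quadric $\{q=0\}$ with $c(b)\neq 0$ via Zariski density of $Q(K)$, and map the line $\overline{R'b}$ into $V$. Two small remarks: you should first dispose of the trivial case $\overline{PR}\subset V$ (the paper does this in one line, and your sentence ``the line $\overline{PR}$ meets $V$ at $R$ with multiplicity one'' tacitly assumes it), and your extra requirement that $\ell$ avoid $F_P$ entirely is unnecessary---since any rational map from $\PP^1$ to a projective variety extends to a morphism and $\phi$ is already regular at $R'$ and at $b$, the paper simply takes any $b\in Q(K)$ with $c(b)\neq 0$ and skips the incidence-variety dimension count.
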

\begin{proof}
We may assume that the line $\overline{PR}$ spanned by the two points $P$ and $R$ is not contained in $V$, otherwise there is nothing to prove. 

Without loss of generality, we can assume $P$ is the point $[1, 0, \ldots, 0]\in\PP^{n}_{[X_0,\ldots,X_n]}$. 
Then we can write the equation of the cubic hypersurface $V$ as $X_0 Q(X_1, \ldots, X_n)+C(X_1, \ldots, X_n)=0$.
Denote the coordinate of $R$ by $[r_0, r_1, \ldots, r_n]$. 
Since the line $\overline{PR}$ is not in $V$, $Q(r_1, \ldots, r_n)\neq 0$.
In particular at least one of $r_1, \ldots, r_n$ is non-zero. 

Since the point $P$ is an ordinary double point, the equation $Q(X_1, \ldots, X_n)=0$ defines a smooth quadric hypersurface of dimension at least $3$ in $\PP^{n-1}$.
By the assumption on the field $K$, the set of rational points of this smooth quadric hypersurface is Zariski dense.
In particular, there is a point $S=[s_1, \ldots, s_n]$ in $\PP^{n-1}$ such that $Q(s_1, \ldots, s_n)=0$ and $C(s_1, \ldots, s_n)$ does not vanish.

Now consider the $k$-rational map
\begin{align*}
\phi: \PP^{n-1} &\dashrightarrow V\\
 [X_1, \ldots, X_n] &\mapsto [-C(X_1, \ldots, X_n), X_1Q(X_1, \ldots, X_n), \ldots, X_nQ(X_1, \ldots, X_n)]\\
\end{align*}
This gives a birational isomorphism between $\PP^{n-1}$ and $V$. 
Furthermore, the birational map $\phi$ is a morphism near the point $S$ and $[r_1, \ldots, r_n]$ by the above discussion. 
In particular,  $\phi(S)=P, \phi([r_1, \ldots, r_n])=R$. 
Then $\phi$ maps the line in $\PP^{n-1}$ connecting $S$ and $[r_1, \ldots, r_n]$ to a rational curve connecting $P$ and $R$ in $V$. 
\end{proof}

 The following lemma is essentially proved in \cite[Proposition 1.4]{MadoreREquivalence}. 
Our version has stronger hypothesis and stronger result. 
\begin{lem}\label{lem:Rconnected}
Let $K$ be either a purely imaginary number field or a global function field and let $X$ be a smooth cubic hypersurface of dimension $n\geq 10$ defined over $K$. For any two $K$-rational points $x$ and $y$, such that the line $L$ spanned by $x$ and $y$ intersects $X$ at three distinct points $x, y, z$,
denote by $H_x, H_y, H_z$ the projective tangent hyperplane at $x, y, z$. Assume the followings hold.
\begin{enumerate}
\item The family of lines passing through $x$ is geometrically irreducible and has dimension $n-3$.
\item The intersection $H_z \cap X$ is a singular cubic hypersurface with an ordinary double point $z$. 
\item The intersection $H_x \cap H_z \cap X$ is a smooth cubic hypersurface of dimension $n-2$.
\end{enumerate}
 Then there is a $K$-morphism $f: \PP^1_K \to X$ such that $f(0)=x, f(\infty)=y$. 
\end{lem}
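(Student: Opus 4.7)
The plan is to build a single smooth $K$-rational conic $C\subset X$ containing both $x$ and $y$; since $C$ is geometrically integral and carries two $K$-rational points, $C\cong\PP^1_K$ as required. The key observation is that it is enough to produce a $K$-rational line $M\subset X$ through $z$. Given such an $M$ (necessarily distinct from $L$, as $L\not\subset X$), the plane $\Pi:=\langle M,L\rangle$ cuts $X$ in $M+C$ with $C$ a plane conic; since $M\cap L=\{z\}$ in $\Pi$ while $L\cap X=\{x,y,z\}$, one has $L\cap C=\{x,y\}$. For a sufficiently general choice of $M$ the conic $C$ is geometrically integral, giving the desired morphism $f:\PP^1_K\to X$ with $f(0)=x$ and $f(\infty)=y$.

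The rest of the argument produces such a line $M$. Every line in $X$ through $z$ lies in the tangent hyperplane $H_z$, so the Fano variety $F_z$ of lines in $X$ through $z$ agrees with the Fano variety of lines through the singular point $z$ of $V_z:=H_z\cap X$. Using hypothesis (2), in local affine coordinates $(w_1,\ldots,w_n)$ on $H_z$ centered at $z$ the cubic $V_z$ has equation $f_2+f_3=0$ with $f_2$ a non-degenerate quadratic form and $f_3$ a cubic form; consequently $F_z=\{f_2=f_3=0\}\subset\PP^{n-1}$ is a complete intersection of multidegree $(2,3)$ and dimension $n-3\geq 7$. Projection from the ODP $z$ (the technique behind the proof of Lemma \ref{lem:singular}) gives a birational isomorphism $V_z\dashrightarrow\PP^{n-1}$ that carries the hyperplane section $H_x\cap V_z=H_x\cap H_z\cap X$, which does not contain $z$ since $L$ is transverse to $X$ at $z$ and hence $z\notin H_x$, isomorphically onto the cubic $\{f_3=0\}\subset\PP^{n-1}$. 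Hypothesis (3) therefore provides the smoothness of $\{f_3=0\}$; combined with the smoothness of the quadric $\{f_2=0\}$ from hypothesis (2) and the genericity of $x$ encoded in hypothesis (1), which rules out tangency between the two hypersurfaces, this shows $F_z$ is a smooth, geometrically irreducible complete intersection of multidegree $(2,3)$ and dimension at least $7$ over $K$.

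The final step is arithmetic: smooth geometrically integral complete intersections of multidegree $(2,3)$ and dimension at least $7$ over a purely imaginary number field, or over a global function field of characteristic not $2,3,5$, carry Zariski dense $K$-rational points by the same Birch--Browning--Vishe-type theorems that guarantee the existence of a rational point on every smooth cubic hypersurface of dimension $\geq 8$ over such $K$. A sufficiently generic $M\in F_z(K)$ therefore exists, and the first paragraph produces the required conic $C$. The main obstacle is the geometric analysis in the second paragraph: showing that hypotheses (1)--(3) together force $F_z$ to be a smooth transverse $(2,3)$ complete intersection of the predicted dimension. Once this is established, the plane-conic construction produces the single $\PP^1_K$ through $x$ and $y$ directly, avoiding any chain-of-curves smoothing argument.
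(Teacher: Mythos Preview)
Your plan has two genuine gaps.

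First, hypothesis (1) concerns the scheme of lines through $x$, not through $z$; nothing in it forces the quadric $\{f_2=0\}$ and the cubic cutting out $F_z$ to meet transversally. Even after correcting the identification of the projected hyperplane section --- in your coordinates the projection from $z$ sends $H_x\cap H_z\cap X$ to $\{f_3+\ell f_2=0\}$ for the linear form $\ell$ determined by $H_x\cap H_z$, not to $\{f_3=0\}$, though of course $\{f_2=0\}\cap\{f_3=0\}=\{f_2=0\}\cap\{f_3+\ell f_2=0\}$ --- the smoothness of each hypersurface separately does not prevent $F_z$ from being singular. None of (1)--(3) excludes a point $p\in F_z$ at which $\nabla f_2(p)$ and $\nabla f_3(p)$ are proportional.

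Second, and more seriously, the arithmetic input you invoke is not available. Zariski density (or even existence) of $K$-rational points on smooth $(2,3)$ complete intersections of dimension $\geq 7$ over purely imaginary number fields is not known; over global function fields the $C_2$ property only yields a point once $n\geq 14$, and gives no density. The paper avoids this entirely by a different mechanism: it produces a rational curve inside the nodal cubic $H_z\cap X$ joining $z$ to a suitably general point $u\in H_x\cap H_z\cap X$ (via Lemma~\ref{lem:singular}, whose only arithmetic input is density of rational points on a smooth quadric of dimension $\geq 3$), and then applies the birational involution $t_x\colon p\mapsto(\text{third point of }\overline{xp}\cap X)$. Since $t_x(z)=y$ and $t_x(u)=x$ whenever $u\in H_x\cap X$ lies off the cone of lines through $x$, the image is the required rational curve. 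Hypothesis~(1) is used precisely to guarantee that this cone is a proper irreducible divisor, so that such a $u$ can be found; hypothesis~(3) furnishes the initial rational point on the smooth cubic $H_x\cap H_z\cap X$ of dimension $n-2\geq 8$, which is exactly the range where Browning--Vishe (number fields) and the $C_2$ property (function fields) apply.
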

\begin{proof}
By assumption neither $x$ nor $y$ is contained in $H_z$.

Let $t_x: X \dashrightarrow X$ be the birational involution defined by sending a general point $p$ to the point $q$ such that $x, p, q$ are colinear. 
For a point $p$ in $H_x \cap X$, not equal to the point $x$, such that the line spanned by $p$ and $x$ is not contained in $X$, 
the birational map $t_x$ is defined near $p$ and $t_x(p)=x$.

The intersection $H_x \cap H_z \cap X$ is a divisor in $H_x \cap X$. 
The locus of points swept out by the family of lines through $x$,
\[
\text{Line}(x)=\{w| w\neq x, \text{ and the line spanned by } x \text{ and } w \text{ is contained in } X\} \cup \{x\}
\]
is an irreducible divisor and contains the point $x$ by definition. 
Since $H_z$ does not contain $x$, the intersection $H_x \cap H_z \cap X$ is not contained in $\text{Line}(x)$.
By the assumption on the field $K$, there is a $K$-rational point $u$ in the intersection $H_x \cap H_z \cap Z$.
This is clear if $K$ is a global function field. 
For the case of number fields, this is proved in \cite{BrowningVishe}.
By \cite[Thereom 1]{KollarCubicUnirational}, the intersection is then unirational and thus has a Zariski dense set of rational points.
In particular, there is a $K$-rational point $u$ such that the birational involution $t_x$ is defined around $u$.  
Lemma \ref{lem:singular} shows that there is rational curve in $H_z \cap X$, thus also in $X$, containing the two points $z$ and $u$. 
Then the birational map $t_x$ maps the rational curve to a rational curve containing $x$ and $y$.
\end{proof}

\begin{thm}\label{thm:WAhighdim}
Let $X$ be a smooth cubic hypersurface in $\PP^n, n\geq 11$ defined over a global field $K$, which is either a purely imaginary field or $\FF_q(B)$, the function field of a curve $B$ defined over a finite field $\FF_q$ of characteristic not equal to $2, 3, 5$. Then $X$ satisfies weak approximation.
\end{thm}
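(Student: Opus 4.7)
The plan is to execute the strategy sketched at the opening of this section. Given $\nu$-adic data $(x_i) \in \prod_i X(\widehat{K}_{\nu_i})$ to approximate to order $N$, I would approximate them by a single $K'$-rational point $p \in X(K')$ for a well-chosen quadratic extension $K'/K$; construct a $K$-rational smooth rational curve $C \subset X$ containing both $p$ and its Galois conjugate $\bar p$; and finally apply weak approximation for $\PP^1_K$ on $C$ to produce a $K$-rational point of $X$ approximating the data.

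For the choice of $K'$ and $p$, I first pick, for each $i$, a generic $K_{\nu_i}$-rational line $L_i \subset \PP^n$ through $x_i$ whose residual intersection with $X$ is a pair of conjugate points $p_i, \bar{p}_i$ defined over a quadratic extension $K_{\nu_i}(\sqrt{\alpha_i})$; the characteristic hypotheses guarantee such generic lines exist. Weak approximation for $\mathbb{G}_m/K$ lets me choose a single $\alpha \in K^\times$ representing $\alpha_i$ modulo squares at each $\nu_i$; set $K' := K(\sqrt{\alpha})$ and let $\nu_i'$ be a place of $K'$ above $\nu_i$, so that $p_i$ is naturally a $\nu_i'$-adic point of $X_{K'}$. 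Since $X$ has a $K$-point (by \cite{BrowningVishe} in the number field case, and in the function field case by the bound $n \geq 11$), it has a $K'$-point, whence by \cite{KollarCubicUnirational} the cubic $X_{K'}$ is $K'$-unirational via a dominant map $\phi: \PP^{n-1}_{K'} \dashrightarrow X_{K'}$. Combined with weak approximation for $\PP^{n-1}_{K'}$, this produces $p \in X(K')$ that is $\nu_i'$-adically close to $p_i$ simultaneously for all $i$. The $K$-rational line $L := \overline{p \bar{p}}$ then meets $X$ in a $K$-rational triple $\{p, \bar{p}, z\}$ with $z \in X(K)$ that is $\nu_i$-close to $x_i$.

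The central geometric step is to produce a $K$-rational smooth rational curve $C \subset X$ through both $p$ and $\bar{p}$. Applying Lemma \ref{lem:Rconnected} over $K'$ to the pair $(x, y) = (p, z)$ --- whose genericity hypotheses hold for generic $p$, since the third intersection of $L$ with $X$ plays the role of ``$z$'' in the lemma and equals $\bar p$ --- yields a $K'$-rational rational curve $C_1 \subset X$ from $p$ to $z$; its Galois conjugate $\bar{C}_1$ runs from $\bar{p}$ to $z$. The Galois-invariant union $C_1 \cup \bar{C}_1$ is a reducible nodal curve with a single node at $z$, defined over $K$. I would then smooth this node in the space of stable maps to $X$, preserving the incidence conditions at $p$, $\bar{p}$, and $z$, to obtain a smooth $K$-rational curve $C \cong \PP^1_K \subset X$ passing through $p$, $\bar{p}$, and $z$. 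The smoothing hinges on $H^1$-vanishing for the normal sheaf of $C_1 \cup \bar{C}_1$ appropriately twisted, which the dimension hypothesis $n \geq 11$ accommodates: one retains enough freedom in choosing the auxiliary $K$-point $u \in H_p \cap H_{\bar{p}} \cap H_z \cap X$ used within Lemma \ref{lem:Rconnected}'s construction of $C_1$ (a smooth complete intersection of dimension $n - 4 \geq 7$ over $K$, whose $K$-points exist by \cite{BrowningVishe}) so as to make the normal bundle of $C_1$ sufficiently positive.

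The proof concludes by applying weak approximation on $C \cong \PP^1_K$ to produce $K$-rational points of $C$ arbitrarily $\nu_i$-close to $z$ simultaneously at all $i$, hence $\nu_i$-close to $x_i$ to order $N$. The main obstacle is the Galois-equivariant construction of $C$: one must descend the $K'$-rational curve given by Lemma \ref{lem:Rconnected} to a smooth $K$-rational curve by passing through the reducible Galois-invariant union $C_1 \cup \bar{C}_1$ and smoothing it in a Galois-equivariant way. This is where both the dimension bound $n \geq 11$ and the density of $K$-points on auxiliary complete intersections (via \cite{BrowningVishe}) become essential.
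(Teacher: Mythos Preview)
Your argument has a genuine gap at the step where you produce $p\in X(K')$ approximating the $p_i$. You write that $K'$-unirationality of $X_{K'}$ via $\phi:\PP^{n-1}_{K'}\dashrightarrow X_{K'}$, combined with weak approximation for $\PP^{n-1}_{K'}$, yields such a $p$. This does not follow: a dominant rational map need not surject on $K'_{\nu'_i}$-points, and its image need not even be $\nu'_i$-adically dense --- the fibres of Koll\'ar's parametrisation are finite of degree $>1$ and may carry no $K'_{\nu'_i}$-rational point (compare the squaring map on $\mathbb{A}^1$ over a local field, whose image is a proper open subset). What you are really invoking here is weak approximation for $X_{K'}$ at the places $\nu'_i$, which is essentially the theorem under proof. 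Note too that if this step \emph{did} go through you would already be finished: the residual point $z\in X(K)$ of $\overline{p\,\bar{p}}\cap X$ approximates each $x_i$ to arbitrary order once $p$ is taken close enough to $p_i$, so the smoothing construction occupying the remainder of your argument is superfluous. Even setting that aside, the smoothing of $C_1\cup\bar{C}_1$ is not justified as written: Lemma~\ref{lem:Rconnected} gives no control whatsoever over the normal bundle of the rational curve it produces, so the asserted $H^1$-vanishing has no support.

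The paper sidesteps both problems. It first fixes a sufficiently general $K$-rational point $x\in X(K)$ and then approximates, not the target points $y_{\nu_i}$, but the \emph{lines} $M_i=\overline{x\,y_{\nu_i}}$ by a single $K$-line $M$ through $x$; since the lines through $x$ form a projective space over $K$, weak approximation there is immediate and no circularity arises. The residual $M\cap X\setminus\{x\}$ is then a degree-$2$ cycle $\{y,z\}$, either split over $K$ or conjugate over a quadratic extension $L/K$. In the split case Lemma~\ref{lem:Rconnected} over $K$ furnishes a $K$-rational curve through $x$ and $y$ directly. In the conjugate case the paper replaces your deformation-theoretic smoothing by an algebraic device: one passes to the Weil restriction $\mathrm{Res}_{L/K}X_L$ with its natural $K$-rational dominant map to $X$, and uses that a $K$-morphism $\PP^1_K\to\mathrm{Res}_{L/K}X_L$ is, by the universal property, the same datum as an $L$-morphism $\PP^1_L\to X_L$. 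Thus it suffices to produce an $L$-rational curve in $X_L$ through $x$ and $y$, which is exactly what Lemma~\ref{lem:Rconnected}, now applied over $L$, provides. No normal-bundle computations are needed.
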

\begin{proof}
Under the assumptions, there exist $K$-rational points of $X$. 
This is clear if $K$ is a global function field. 
For the case of number fields, this is proved in \cite{BrowningVishe}.
Then by \cite[Theorem 1]{KollarCubicUnirational}, the intersection is unirational and thus has a Zariski dense set of rational points.
Choose $x$ to be a general point such that
\begin{itemize}
\item
The family of lines through $x$ is geometrically irreducible and has dimension $n-3$.
\item
Denote by $H_x$ the projective tangent hyperplane at $x$. Then the projective tangent cone of $H_x \cap X$ has an ordinary double point at $x$.
\end{itemize}
These two properties can be achieved since the set of points in $X$ satisfying these properties is non-empty and Zariski dense (\cite[Lemma 5.4, 5.7]{Hasse}).

Denote by $x_\nu$ the $K_\nu$-rational point induced by $x$ for each place $\nu$. 
Given finitely many places $\nu_1, \ldots, \nu_k$ and any $\nu_i$-adic point $y_{\nu_i}$ which we want to approximate, let $M_i$ be the line spanned by $x_{\nu_i}$ and $y_{\nu_i}$. 
Without loss of generality, we may assume that the line $M_i$ intersect the cubic hypersurface at a third point $z_{\nu_i}$ different from $x_{\nu_i}$ and $y_{\nu_i}$. 
We may also assume that the points $z_{\nu_i}$ are general points such that
\begin{itemize}
\item The family of lines through $z_{\nu_i}$ is geometrically irreducible and has dimension $n-3$.
\item Let $H_{z_{\nu_i}}$ be tangent hyperplane at $z_{\nu_i}$. Then $H_{z_{\nu_i}} \cap X$ is a singular cubic hypersurface with an ordinary double point $z_{\nu_i}$.
\item The intersection $H_{x_{\nu_i}} \cap H_{z_{\nu_i}} \cap X$ is a smooth cubic hypersurface of dimension $n-2$.
\end{itemize}

For any pre-specified order of weak approximation, there is a line $M$ defined over $K$ which approximates all the $M_i$'s to that order and contains the point $x$. 
This follows from weak approximation for the space of lines $M\subset X$ containing $x$, 
which holds as this space is isomorphic to $\PP^d_K$ for some $d$.

If the line $M$ intersect $X$ at two $K$-rational points $y, z$, then by Lemma \ref{lem:Rconnected}, there is a rational curve containing them (note that our assumptions on $z_{\nu_i}$ imply the assumptions of the lemma). 
In particular, the base change of this rational curve to $K_{\nu_i}$ contains $K_{\nu_i}$ points which approximate the points $y_{\nu_i}$ to order $N$. 
Then the theorem follows since we know weak approximation holds for $\PP^1_K$ and thus can find a rational point in $\PP^1_K$ which approximates the $\nu_i$-adic points.

Now assume the line $M$ intersect $X$ at two points $y, z$ which are defined over a separable degree $2$ field extension $L/K$. 
By construction, the irreducible (over $K$) degree $2$ zero cycle $y+z$ is close to the degree $2$ zero cycle $y_{\nu_i}+z_{\nu_i}$ in the $\nu_i$-adic topology for each $\nu_i$.

In the following we will show that there is a rational curve defined over $K$ which contains the degree $2$ zero cycle $y+z$. Once we know this, the theorem follows for the same reason as above.

Consider the Weil restriction of scalars $\text{Res}_{L/K}X_L$ and the $K$-rational dominant map $\text{Res}_{L/K}X_L \dashrightarrow X$. After base change to $L$, the Weil restriction is isomorphic to $X_L \times X_L$ and the rational map maps a general pair of points $(u, v)$ to $w$ such that $u, v, w$ are colinear (c.f. \cite[Section 15, Proposition 15.1]{ManinCubic}, \cite[Example 3.8, Exercise 3.11]{KollarCubic},  and a detailed discussion of the former two in \cite[Construction 2.1]{Requiv}).

Recall that by the universal property of Weil restrictions, a $K$-morphism $f: \PP^1_K \to \text{Res}_{L/K}X_L$ is the same as an $L$-morphism $g: \PP^1_L \to X_L$; and an $L$-point of $\text{Res}_{L/K}X_L$ is the same as two $L$-points of $X_L$.

There are two $L$-rational points $u, v$ of $Res_{L/K}X_L$, 
which are mapped to $z$ and $y$ respectively, i.e. the points corresponding to the pair of $L$-rational points $(x, y)$ and $(z, x)$.
Moreover these two points are conjugate to each other by the Galois group $\text{Gal}(L/K)$.

So it suffices to show that there is a rational curve over $K$ which contains one of the two Galois conjugate $L$-rational points $u, v$ of $Res_{L/K}X_L$.
This is equivalent of showing that there is a rational curve defined over $L$ containing $x$ and $y$ (or a rational curve defined over $L$ containing $x$ and $z$), which follows from Lemma \ref{lem:Rconnected}.
\end{proof}

\bibliographystyle{alpha}
\bibliography{MyBib}

\end{document}